\documentclass[a4wide,11pt]{amsart}
\usepackage[utf8]{inputenc}
\usepackage[english]{babel}
\usepackage[T1]{fontenc}
\usepackage{amsfonts}
\usepackage{geometry}
\usepackage{amsmath, amsthm, amssymb}
\usepackage{tikz}
\usepackage[bookmarks=true]{hyperref}
\hyphenation{english}

%Theorems

\theoremstyle{plain}
\newtheorem{theorem}{Theorem}[section]
\newtheorem{proposition}[theorem]{Proposition}
\newtheorem{corollary}[theorem]{Corollary}
\newtheorem{lemma}[theorem]{Lemma}
\theoremstyle{definition}
\newtheorem{definition}[theorem]{Definition}
\newtheorem{example}[theorem]{Example}

\newtheorem{construction}[theorem]{Construction}
\newtheorem{remark}[theorem]{Remark}
\newtheorem{question}[theorem]{Question}

\newcommand{\rank}{{\rm rank\ }}
\def\NN{\mathbb{N}}
\def\ZZ{\mathbb{Z}}
%Geometry
\geometry{verbose,tmargin=2.5cm,lmargin=2cm,rmargin=2cm}
\headsep 1cm

\date{}

%Document
\title{The poset of proper divisibility}

\author[D. Bolognini, A. Macchia, E. Ventura, V. Welker]{Davide Bolognini, Antonio Macchia, Emanuele Ventura, Volkmar Welker}

\address[D. Bolognini]{\small Fachbereich Mathematik und Informatik, Philipps-Universit\"at Marburg, Hans-\break Meerwein-Strasse 6, 35032 Marburg, Germany}
\email{\small davide.bolognini@yahoo.it}

\address[A. Macchia]{\small Fachbereich Mathematik und Informatik, Philipps-Universit\"at Marburg, Hans-\break Meerwein-Strasse 6, 35032 Marburg, Germany}
\email{\small macchia.antonello@gmail.com}

\address[E. Ventura]{\small Department of Mathematics and Systems Analysis, Aalto University, Helsinki, Finland}
\email{\small emanuele.ventura@aalto.fi}

\address[V. Welker]{\small Fachbereich Mathematik und Informatik, Philipps-Universit\"at Marburg, Hans-\break Meerwein-Strasse 6, 35032 Marburg, Germany}
\email{\small welker@mathematik.uni-marburg.de}

\begin{document}
\begin{abstract}
  We study the partially ordered set $P(a_1,\ldots, a_n)$ of
  all multidegrees $(b_1,\dots,b_n)$ of monomials
  $x_1^{b_1}\cdots x_n^{b_n}$ which properly divide $x_1^{a_1}\cdots x_n^{a_n}$.
  We prove that the order complex $\Delta(P(a_1,\dots,a_n))$ of
  $P(a_1,\ldots a_n)$ is (non-pure) shellable,
  by showing that the order dual of $P(a_1,\ldots,a_n)$ is
  $\mathrm{CL}$-shellable. Along the way, we exhibit the poset $P(4,4)$ as
  a new example of a poset with $\mathrm{CL}$-shellable order dual that is not
  $\mathrm{CL}$-shellable itself.
  For $n = 2$ we provide the rank of all homology groups of the order
  complex $\Delta \left( P(a_1,a_2) \right)$. Furthermore, we give a
  succinct formula for the Euler characteristic of
  $\Delta \left(  P(a_1,a_2) \right)$.
\end{abstract}

\thanks{The first author was supported by Università degli Studi di Genova}
\thanks{The second author was supported by a DAAD Research scholarship for Post-docs}
\subjclass[2010]{06A07, 06A11, 05E45}
\keywords{proper division, posets, CL-shellability, simplicial homology, Euler characteristic}

\maketitle

\section{Introduction}

In this paper we study proper divisibility of monomials in the polynomial ring in $n$ variables $x_1,\ldots, x_n$. Since any monomial $x_1^{a_1} \cdots x_n^{a_n}$ is determined by its exponent vector $(a_1,\ldots, a_n) \in \NN^n$, we phrase all concepts in terms of exponent vectors.

For every $(a_1,\ldots, a_n), (b_1,\ldots, b_n) \in \NN^n$, we say that $(a_1,\ldots, a_n)$ \textit{properly divides} $(b_1,\ldots, b_n)$ if for every $1 \leq i \leq n$, either $a_i = b_i = 0$ or $a_i < b_i$.
Proper divisibility of monomials appears naturally in the context of the Buchberger algorithm from Gr\"obner basis theory and it plays an important role in the combinatorics of free resolutions of monomial ideals, as shown by Miller and Sturmfels \cite{MS99,MS05} (see also \cite{OW14}).

Here we consider proper divisibility as an order relation, setting $(a_1,\ldots, a_n) \leq (b_1,\ldots, b_n)$ if and only if either $(a_1,\ldots, a_n) = (b_1,\ldots, b_n)$ or $(a_1,\ldots, a_n)$ properly divides $(b_1,\ldots, b_n)$. For an arbitrary $(a_1,\ldots, a_n) \in \NN^n$, we set
$$
P(a_1,\ldots, a_n) = \{ (b_1,\ldots, b_n) \in \NN^n : (b_1,\ldots, b_n) \leq (a_1,\ldots, a_n) \}
$$
and consider $P(a_1,\ldots, a_n)$ as a partially ordered set, poset for short, ordered by proper divisibility. With this order, $P(a_1,\ldots, a_n)$ has a unique minimal element $\widehat{0} = (0,\ldots,0)$ and a unique maximal element $\widehat{1} = (a_1,\ldots,a_n)$. The poset of all divisors of $(a_1,\ldots,a_n)$ with respect to the usual divisibility relation is well understood and it is a direct product of $n$ chains of length $a_1,\ldots,a_n$ respectively. Our approach to study $P(a_1,\ldots,a_n)$ is topological. We associate to $P(a_1,\ldots,a_n)$ its order complex $\Delta(P(a_1,\ldots,a_n))$, which is the simplicial complex consisting of all chains in $P(a_1,\ldots,a_n) \setminus \{ \widehat{0},\widehat{1} \}$. Through order complex and geometric realization, we can study a poset in topological terms and, in particular, we can talk about homotopy equivalence and homology of posets. Note that, in general, not all maximal chains in $P(a_1,\ldots, a_n)$ have equal length and hence its order complex is not pure. A central tool for describing the topology of posets is (non-pure) shellability (see \cite{BW96,BW97}). In our first result we prove shellability for $\Delta ( P(a_1,\ldots, a_n) )$.

\begin{theorem} \label{T.shellable}
For all $(a_1,\ldots, a_n) \in \NN^n$, the order complex $\Delta(P(a_1,\ldots, a_n))$ is shellable. In particular, $\Delta \left( P(a_1,\ldots, a_n) \right)$ is homotopy equivalent to a wedge of spheres and its reduced simplicial homology groups
$\widetilde{H}_\bullet \left( \Delta \left( P(a_1,\ldots, a_n) \right);\ZZ \right)$ are torsion-free.
\end{theorem}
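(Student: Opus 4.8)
The plan is to derive shellability of $\Delta(P(a_1,\ldots,a_n))$, together with the homotopy and homology statements, from $\mathrm{CL}$-shellability of the order \emph{dual} $P(a_1,\ldots,a_n)^*$. The point is that a poset and its order dual have the very same order complex, since reversing all order relations leaves the set of chains unchanged. Thus, by the Björner--Wachs theory of (non-pure) $\mathrm{CL}$-shellability \cite{BW96,BW97}, if $P(a_1,\ldots,a_n)^*$ admits a $\mathrm{CL}$-labeling then $\Delta(P(a_1,\ldots,a_n)^*\setminus\{\widehat{0},\widehat{1}\})=\Delta(P(a_1,\ldots,a_n))$ is shellable; and any shellable simplicial complex is homotopy equivalent to a wedge of spheres, one $d$-sphere for each facet that, in the shelling order, is attached to the union of the preceding facets along its whole boundary, so in particular its reduced integral homology is free. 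Everything therefore reduces to producing an explicit $\mathrm{CL}$-labeling of $P(a_1,\ldots,a_n)^*$, that is, an edge labeling of the cover relations of $P(a_1,\ldots,a_n)$, read downward from $\widehat{1}=(a_1,\ldots,a_n)$ toward $\widehat{0}=(0,\ldots,0)$, whose behaviour may depend on the chain already traversed.

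The first step is to describe the cover relations of $P(a_1,\ldots,a_n)$ explicitly. Since proper divisibility forces a strict inequality in every coordinate of the support, covers come in two rather different shapes: those that keep the support fixed and lower some coordinate by exactly $1$ (such a step can be compelled to leave arbitrarily large gaps in the other coordinates), and those that shrink the support, sending one or more coordinates from a positive value directly down to $0$. This dichotomy is exactly why maximal chains have very unequal lengths and the order complex is non-pure, and it is what the labeling has to cope with.

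Next I would attach to each cover a label recording which coordinates move and in what manner, together with enough data about the coordinates already altered along the chain below it to make the rule genuinely chain-dependent, and I would order the labels so that, inside every interval $[u,v]$ of the dual and relative to every root (maximal chain from $\widehat{1}$ down to $u$), the chain that repeatedly drives the currently cheapest still-movable coordinate as far down as the interval permits is at once the unique strictly increasing maximal chain of $[u,v]$ and the lexicographically least maximal chain of $[u,v]$. Verifying these two $\mathrm{CL}$-axioms is the technical core of the argument and the step I expect to be the main obstacle: it must be carried out according to the shapes of the covers involved, with special care at the interface between lowering a coordinate while keeping it positive and annihilating a coordinate outright, and one has to confirm that the chain-dependence is genuinely needed --- it is, and this is precisely the feature that will allow us to exhibit $P(4,4)^*$ as $\mathrm{CL}$-shellable while $P(4,4)$ itself is not.

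Finally, with the $\mathrm{CL}$-labeling of $P(a_1,\ldots,a_n)^*$ in place, Björner--Wachs yields shellability of $\Delta(P(a_1,\ldots,a_n))$, and from a shelling one reads off directly that $\Delta(P(a_1,\ldots,a_n))$ is homotopy equivalent to a wedge of spheres and that $\widetilde{H}_\bullet(\Delta(P(a_1,\ldots,a_n));\ZZ)$ is torsion-free.
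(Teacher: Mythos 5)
Your overall reduction is exactly the paper's: pass to the order dual $P(a_1,\ldots,a_n)^*$, note that $\Delta(P^*)\cong\Delta(P)$, establish $\mathrm{CL}$-shellability of the dual, and then invoke the Bj\"orner--Wachs machinery to get shellability, the wedge-of-spheres homotopy type, and torsion-freeness. All of those transfer steps are correct and are standard. The paper realizes the $\mathrm{CL}$-shellability not by writing down a chain-edge labeling directly but by exhibiting a \emph{recursive atom ordering} of $P(a_1,\ldots,a_n)^*$ (equivalent to $\mathrm{CL}$-shellability by \cite[Thm.~5.11]{BW96}): on each interval $[(b_1,\ldots,b_n),(0,\ldots,0)]$ the atoms are ordered by reverse lexicographic order, the verification of the two axioms proceeds by induction on $n$ and on the length of the poset, and the heart of the argument is an explicit construction, for given atoms $p\prec p'$ and $q>p,p'$, of the required atom $q'$ of $[p',\widehat{1}]$ above the least atom $p_1=(a_1-1,\ldots,a_n-1)$.

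The gap is that you never carry out this core step: you describe in qualitative terms what a $\mathrm{CL}$-labeling ought to do and explicitly defer "verifying these two $\mathrm{CL}$-axioms" as "the main obstacle." That verification \emph{is} the theorem; everything else in your write-up is formal bookkeeping that would apply verbatim to any poset whatsoever. Moreover, the one concrete hint you give about the labeling looks off: you want the distinguished (increasing, lexicographically least) maximal chain of each rooted interval to be the one that "drives the currently cheapest still-movable coordinate as far down as the interval permits," whereas in the paper's ordering the least atom of $[(b_1,\ldots,b_n),(0,\ldots,0)]$ is $(\bar b_1,\ldots,\bar b_n)$ with $\bar b_h=b_h-1$ for $b_h\neq 0$, so the distinguished chain decreases every nonzero coordinate by exactly $1$ at each step --- the opposite extreme. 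For instance in $P(3,3)^*$ the paper's first chain is $(3,3)\rightarrow(2,2)\rightarrow(1,1)\rightarrow(0,0)$, not one that annihilates a coordinate early. Until you fix a concrete labeling (or atom ordering) and prove the two axioms for it --- including the delicate case analysis at covers that send a coordinate to $0$, which is where $P(4,4)$ itself fails --- the proof is incomplete.
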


Indeed, we prove a slightly stronger statement. In Theorem \ref{T.dualCLshellable} we show that the poset $P(a_1,\ldots, a_n)^*$, which is the order dual of $P(a_1,\ldots,a_n)$, admits a recursive atom ordering. By \cite[Thm. 5.11]{BW96}, this is equivalent to $\mathrm{CL}$-shellability, which in turn, by \cite[Thm. 11.6]{BW97} and \cite[Thm. 5.8]{BW96}, implies vertex-decomposability, and hence shellability, of the order complex $\Delta(P(a_1,\ldots, a_n)^*)$. Then Theorem \ref{T.shellable} follows by the isomorphism $\Delta(P(a_1,\ldots, a_n)^*) \cong \Delta(P(a_1,\ldots, a_n))$.

$\mathrm{CL}$-shellability and recursive atom orderings are concepts defined for non-pure posets in \cite{BW96,BW97}. The posets $P(a_1,\ldots,a_n)$ provide a good source of counterexamples in this context.

\begin{proposition} \label{P.notshellable}
  The poset $P(4,4)$ is not $\mathrm{CL}$-shellable but its dual $P(4,4)^*$ is.
  In particular, $\Delta(P(4,4))$ is shellable but $P(4,4)$ is not $\mathrm{CL}$-shellable.
\end{proposition}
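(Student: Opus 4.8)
The plan is the following. The assertion that $P(4,4)^*$ is $\mathrm{CL}$-shellable is the special case $n=2$, $(a_1,a_2)=(4,4)$ of Theorem~\ref{T.dualCLshellable} together with \cite[Thm.~5.11]{BW96}, and the shellability of $\Delta(P(4,4))$ is the corresponding special case of Theorem~\ref{T.shellable}. So the only statement needing a separate argument is that $P(4,4)$ itself is \emph{not} $\mathrm{CL}$-shellable; equivalently, by \cite[Thm.~5.11]{BW96}, that $P(4,4)$ admits no recursive atom ordering. Since $P(4,4)$ has very few atoms, I would prove this by directly inspecting the definition of recursive atom ordering.

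First I would collect the combinatorial data. As a set, $P(4,4)=\{0,1,2,3\}^2\cup\{(4,4)\}$, with $\widehat{0}=(0,0)$ and $\widehat{1}=(4,4)$; since any minimal element of $P(4,4)\setminus\{\widehat{0}\}$ has both coordinates at most $1$, the poset $P(4,4)$ has exactly three atoms $a=(1,0)$, $b=(0,1)$, $c=(1,1)$. Reading off the covering relations coordinatewise, one has $a\le w$ if and only if $w=a$ or $w_1\ge 2$, and hence the atoms of $[a,\widehat{1}]$ are exactly $(2,0),(2,1),(2,2),(2,3)$; by the symmetry interchanging the two coordinates, the atoms of $[b,\widehat{1}]$ are $(0,2),(1,2),(2,2),(3,2)$. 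These checks are short because all covers of $P(4,4)$ are transparent coordinatewise.

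The heart of the proof is a failure of condition (ii) in the definition of a recursive atom ordering: for an ordering $a_1,\dots,a_t$ of the atoms one requires that for all $i<j$ and all $y$ with $a_i<y$ and $a_j<y$ there be some $k<j$ and some atom $z$ of $[a_j,\widehat{1}]$ with $a_k<z\le y$. Fix an arbitrary ordering of the atoms $\{a,b,c\}$; one of $a,b$ precedes the other, and after possibly applying the coordinate-swapping automorphism of $P(4,4)$ (which exchanges $a$ and $b$ and fixes $c$) I may assume $a=a_i$ and $b=a_j$ with $i<j$. Take $y=(2,3)$; then $a<y$ and $b<y$. Among the atoms $(0,2),(1,2),(2,2),(3,2)$ of $[b,\widehat{1}]$, only $(0,2)$ and $(1,2)$ satisfy $z\le y$, and neither of them lies above $a=(1,0)$ or above $c=(1,1)$, because $(1,0)\le(k,2)$ and $(1,1)\le(k,2)$ each force $k\ge 2$. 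Hence no admissible pair $(k,z)$ exists, condition (ii) fails for this $(i,j)$, and the ordering is not a recursive atom ordering. As the ordering was arbitrary, $P(4,4)$ admits no recursive atom ordering, so it is not $\mathrm{CL}$-shellable; combined with the shellability of $\Delta(P(4,4))$, this yields the ``in particular'' statement as well.

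The step I expect to require the most care is locating the right witnesses of failure: recognizing that already the single pair of atoms $\{(1,0),(0,1)\}$ together with the element $(2,3)$ obstructs condition (ii), and then verifying the few covering relations in $[b,\widehat{1}]$ showing that $(0,2)$ and $(1,2)$ are the only atoms of that interval below $(2,3)$. Everything else is a brief finite verification; in particular the argument never needs to descend recursively into the upper intervals, since condition (ii) already fails at the level of the whole poset.
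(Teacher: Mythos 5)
Your proof is correct and takes essentially the same route as the paper: both arguments reduce the positive assertions to Theorem~\ref{T.dualCLshellable}, use the coordinate-swap symmetry to assume $(1,0)$ precedes $(0,1)$, and exhibit the failure of condition (ii) of Definition~\ref{def recursive atom ordering} with the same witness $q=(2,3)$. Your explicit verification that $(0,2)$ and $(1,2)$ are the only atoms of $[(0,1),\widehat{1}]$ below $(2,3)$ and that neither lies above $(1,0)$ or $(1,1)$ just spells out the check the paper leaves to the reader.
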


In \cite{W85} Walker provides an example of a pure poset whose order complex is shellable, but which is not $\mathrm{CL}$-shellable. Our example, $P(4,4)$, is not pure but smaller both in the number of elements and in dimension than the example from \cite{W85}. The question if there is a non-$\mathrm{CL}$-shellable poset whose dual is $\mathrm{CL}$-shellable was posed as an open question by Wachs in \cite[p.\,71]{W07}. Already in 2008 it was answered by Schweig \cite{SW08}, who provided a counterexample of the same dimension as ours and of almost equal size, but which is
even pure. Note that, the example by Schweig also provides a poset that is shellable but not $\mathrm{CL}$-shellable.

Using the $\mathrm{CL}$-shelling from Theorem \ref{T.dualCLshellable} one can in principle read off the homotopy type and the homology groups. Since the process is technically involved, we present a pleasing solution for the case $n=2$ only. To simplify the notation, in this case we set $a=a_1$ and $b=a_2$ and, without loss of generality, we may assume $a \leq b$.

\begin{theorem} \label{T.homology}
  Let $2\leq a\leq b$.
  Then
  $\widetilde{H}_i  (\Delta ( P(a,b) ) ; \ZZ ) = 0$ for $i > a-2$ and
  \begin{equation}\label{Eq.homology}
    \mathrm{rank}\ \widetilde H_i (\Delta ( P(a,b) ); \mathbb Z )=
    2 \sum_{t=0}^i \binom{a-3-i}{t-1} \bigg[ \binom{i}{t}\binom{b-2-i}{i-t} + \binom{i}{t-1}\binom{b-3-i}{i-t} \bigg],
  \end{equation}
  for $0\leq i\leq a-2$, where we set $\binom{-1}{-1}=1$.
\end{theorem}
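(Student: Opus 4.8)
The plan is to read the Betti numbers off the $\mathrm{CL}$-shelling of $P(a,b)^*$ furnished by the recursive atom ordering of Theorem~\ref{T.dualCLshellable}. By the Björner--Wachs theory \cite{BW96,BW97}, a bounded $\mathrm{CL}$-shellable poset $Q$ has $\Delta(Q\setminus\{\widehat{0},\widehat{1}\})$ homotopy equivalent to a wedge of spheres, and for every $i$ the rank of $\widetilde H_i$ equals the number of homology facets of dimension $i$, which equals the number of maximal chains of $Q$ of length $i+2$ that are \emph{decreasing} with respect to the associated $\mathrm{CL}$-labeling $\lambda$. Applying this to $Q=P(a,b)^*$, and using Theorem~\ref{T.shellable} to know that $\widetilde H_\bullet$ is torsion-free (so ranks determine the groups) together with $\Delta(P(a,b)^*)\cong\Delta(P(a,b))$, the problem reduces to counting, for each $i$, the decreasing maximal chains of $P(a,b)^*$ of length $i+2$; under order reversal these correspond to certain maximal chains $\widehat{0}=p_0\lessdot p_1\lessdot\cdots\lessdot p_{i+2}=\widehat{1}$ of $P(a,b)$.

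The second step is to make the covering structure of $P(a,b)$ explicit. Writing elements as pairs $(c,d)$, the proper part splits into the interior rectangle $I=\{1,\dots,a-1\}\times\{1,\dots,b-1\}$ and the two axis chains $\{(0,d):1\le d\le b-1\}$ and $\{(c,0):1\le c\le a-1\}$, which are mutually incomparable. One checks: inside $I$ one has $(c,d)\lessdot(c',d')$ iff $c\le c'$, $d\le d'$ and at least one of $c'-c,\ d'-d$ equals $1$; from $(0,d)$ the covers are $(0,d+1)$ and $(c',d+1)$ for every $c'\ge 1$ (symmetrically on the other axis); the atoms are $(0,1),(1,0),(1,1)$; and the coatoms are the $(a-1,d)$ and the $(c,b-1)$. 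Hence every maximal chain consists of a (possibly empty) initial run along a single axis, a single step into $I$, and a run inside $I$ up to a coatom, followed by the step to $\widehat{1}$ (together with the purely axial chains), and its interior elements are recorded by a strictly increasing sequence of first coordinates drawn from $\{1,\dots,a-1\}$ and one of second coordinates drawn from $\{1,\dots,b-1\}$.

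With the recursive atom ordering of Theorem~\ref{T.dualCLshellable} in hand, one then determines exactly which of these chains are decreasing for $\lambda$, and this classification produces the shape of \eqref{Eq.homology}. I expect the decreasing chains of dimension $i$ to be parametrized by: a split of the $i$ increment steps into $t$ that advance the first coordinate and $i-t$ that advance the second (the outer summation); the choice of which coordinate values are actually attained, subject to the constraint that at each step some difference is $1$, yielding factors $\binom{a-3-i}{t-1}$ and $\binom{i}{t}\binom{b-2-i}{i-t}$, $\binom{i}{t-1}\binom{b-3-i}{i-t}$; and a binary choice --- which axis the initial run uses, equivalently which of the two coatom types terminates the chain --- accounting for the prefactor $2$. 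The degenerate case $t=0$, which occurs only in top degree $i=a-2$, is exactly what the convention $\binom{-1}{-1}=1$ handles, and $\widetilde H_i=0$ for $i>a-2$ falls out of the classification, since a decreasing chain turns out to have at most $a-1$ interior elements. Assembling the count over all choices gives the right-hand side of \eqref{Eq.homology}; checking it against the Euler characteristic quoted in the introduction and against small cases such as $P(2,b)$, $P(3,b)$, $P(3,3)$ (the last of which has contractible order complex) gives useful sanity checks.

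The main obstacle is the classification in the third step: translating the combinatorics of the specific recursive atom ordering into the decreasing-chain condition and then into binomial coefficients, with all boundary indices correct --- this is the source both of the slightly unusual binomials and of the $\binom{-1}{-1}=1$ convention. By contrast, the first and second steps and the final summation are essentially bookkeeping once the shelling and the covering relations are available.
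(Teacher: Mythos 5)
Your framework is exactly the paper's: equip $P(a,b)^*$ with the CL-labeling induced by the recursive atom ordering of Theorem~\ref{T.dualCLshellable} and count, for each $i$, the decreasing (falling) maximal chains of length $i+2$. That much is correct. But everything that actually produces formula \eqref{Eq.homology} is missing from your write-up, and you say so yourself (``I expect the decreasing chains \dots to be parametrized by \dots''; ``the main obstacle is the classification in the third step''). Concretely, two things are absent. First, the combinatorial criterion for a maximal chain of $P(a,b)^*$ to be falling with respect to this particular labeling: the paper proves (Lemma~\ref{Falling chains}) that a chain is falling if and only if no step passes to the lexicographically least atom of the relevant interval (equivalently, no interior step decreases both coordinates by exactly $1$) and no intermediate element is a border element $(0,k),(k,0),(1,k),(k,1)$. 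Without this criterion the enumeration cannot even be set up. Second, the enumeration itself: the paper splits the falling chains according to which of $(1,1),(1,0),(0,1)$ they contain, encodes a chain by its increment sequence $(u_\ell,v_\ell)$ subject to $u_\ell,v_\ell>0$ and $\min(u_\ell,v_\ell)=1$ but not $u_\ell=v_\ell=1$ at non-final steps, counts each of the three families by compositions (three products of binomials summed over $t$), and then performs a page of Pascal-type manipulations to reach \eqref{Eq.homology}. None of this is in your proposal.

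Moreover, two of your heuristic guesses are off in ways that would derail an attempt to complete the argument along your lines. The prefactor $2$ in \eqref{Eq.homology} is \emph{not} a symmetric ``which axis does the initial run use'' choice: the three contributions $F_{(1,1)},F_{(1,0)},F_{(0,1)}$ are pairwise non-symmetric (the labeling is built from the dual lexicographic order, which treats the two coordinates differently, and $a\le b$), and the factor $2$ only emerges after combining them via binomial identities. And the vanishing $\widetilde H_i=0$ for $i>a-2$ does not simply ``fall out of the classification''; it needs the separate argument of Proposition~\ref{P.vanishing}, which uses the border-element condition to bound the length of a falling chain. So the proposal is a correct plan identical in outline to the paper's proof, but the substance of the theorem --- the characterization of falling chains and their count --- is a genuine gap.
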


A remarkable property of these posets is the following persistence theorem,
which is a phenomenon rarely observed in posets defined {\it naturally} on
combinatorial objects.

\begin{proposition} \label{P.nojumps}
  For every $a,b$, with $2 \leq a \leq b$, there exists an integer $t_{(a,b)} \geq 0$ such that
  \[
     H_i ( \Delta (P(a,b)); \mathbb Z ) \neq 0 \text{ if and only if } 0 \leq i \leq t_{(a,b)},
  \]
  where $H_i$ denotes the $i$\textsuperscript{th} (non-reduced) simplicial homology group.
\end{proposition}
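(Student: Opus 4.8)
The plan is to read the statement off the explicit formula of Theorem~\ref{T.homology}. The first step is to pass from unreduced to reduced homology. Since $2\le a\le b$, the vector $(1,1)$ lies strictly between $\widehat{0}$ and $\widehat{1}$ in $P(a,b)$, so $\Delta(P(a,b))$ is non-empty; hence $H_0(\Delta(P(a,b));\ZZ)\neq 0$, while $H_i=\widetilde H_i$ for every $i\ge 1$. It therefore suffices to show that
\[
  S_{(a,b)}\ :=\ \bigl\{\, i\ge 1\ :\ \widetilde H_i(\Delta(P(a,b));\ZZ)\neq 0 \,\bigr\}
\]
is an initial segment $\{1,\dots,t\}$ of the positive integers (possibly empty): one then sets $t_{(a,b)}:=\max\bigl(S_{(a,b)}\cup\{0\}\bigr)$. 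For $a=2$ Theorem~\ref{T.homology} gives $S_{(2,b)}=\emptyset$, so assume $a\ge 3$ from now on.

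The key observation is that, with the stated convention ($\binom{n}{k}=0$ for $k\notin\{0,1,\dots,n\}$, with the single exception $\binom{-1}{-1}=1$), every summand on the right-hand side of \eqref{Eq.homology} is a product of non-negative integers. Hence $\widetilde H_i=0$ precisely when all those summands vanish, and the question becomes arithmetic. For $i=0$ the sum reduces to a multiple of $\binom{a-3}{-1}=0$, so $\widetilde H_0=0$. For $1\le i\le a-3$ I would split the sum in \eqref{Eq.homology} into its two sub-sums, according to whether the last binomial factor is $\binom{b-2-i}{i-t}$ or $\binom{b-3-i}{i-t}$, discard the $(t=0)$-contributions (which vanish because $a-3-i\ge 0$), and re-index each by $k=i-t$; this rewrites $\tfrac12\,\mathrm{rank}\,\widetilde H_i$ as
\[
  \sum_{k=0}^{i-1}\binom{a-3-i}{\,i-1-k\,}\left[\binom{i}{k}\binom{b-2-i}{k}+\binom{i}{k+1}\binom{b-3-i}{k}\right].
\]
Because $i\le a-3\le b-3$, all upper indices occurring here are non-negative, and a termwise inspection shows that the bracket is positive exactly when $k\le b-2-i$, while the leading factor is positive exactly when $2i-a+2\le k\le i-1$. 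Consequently $\widetilde H_i\neq 0$ if and only if the integer interval $\bigl[\max(0,\,2i-a+2),\ \min(i-1,\,b-2-i)\bigr]$ is non-empty, which, using $1\le i\le a-3\le b-3$, reduces to the single inequality $3i\le a+b-4$. Finally, for the top index $i=a-2$ the leading factor equals $\binom{-1}{t-1}$, which kills every term except $t=0$ and leaves $\tfrac12\,\mathrm{rank}\,\widetilde H_{a-2}=\binom{b-a}{a-2}$; this is positive precisely when $b\ge 2a-2$, i.e.\ when $a-2\le(a+b-4)/3$.

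Combining these computations with the vanishing $\widetilde H_i=0$ for $i>a-2$ from Theorem~\ref{T.homology}, one obtains, for $a\ge 3$,
\[
  S_{(a,b)}\ =\ \bigl\{\, i\ge 1\ :\ i\le \min\bigl(a-2,\ \lfloor(a+b-4)/3\rfloor\bigr)\bigr\},
\]
which is an initial segment of $\ZZ_{\ge 1}$; the proposition follows, with $t_{(a,b)}=\min\bigl(a-2,\ \lfloor(a+b-4)/3\rfloor\bigr)$ (understood as $0$ when that set is empty). The main obstacle is the second paragraph: one must re-index the double sum of \eqref{Eq.homology} into a shape in which positivity can be decided summand by summand, and separately dispatch the boundary value $i=a-2$ --- it being precisely the exceptional convention $\binom{-1}{-1}=1$ that keeps $\widetilde H_{a-2}$ from vanishing once $b$ is large.
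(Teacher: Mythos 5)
Your proof is correct, but it follows a genuinely different route from the paper's. The paper argues combinatorially: for each $1\le i\le t_{(a,b)}$ it exhibits an explicit falling chain of length $i+2$ in $P(a,b)^{\ast}$ (splitting into the cases $b\ge 2a-2$ and $b<2a-2$, treating $a\in\{4,5\}$ by hand, and running an induction on $a$ that prepends $(a,b)\rightarrow(a-2,b-1)$ to falling chains of $P(a-2,b-1)^{\ast}$), and then invokes (FCH). You instead read the statement off the closed formula of Theorem \ref{T.homology}: under the binomial conventions of \eqref{Eq.homology} every summand is a product of non-negative integers, so non-vanishing is equivalent to the existence of a single positive summand, and your re-indexing pins this down to the condition $1\le i\le\min\bigl(a-2,\lfloor(a+b-4)/3\rfloor\bigr)$ for $a\ge 3$. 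I have checked the interval computation, the boundary cases $i=0$ and $i=a-2$, and the consistency of your closed form for $t_{(a,b)}$ with Proposition \ref{Bounds on homology} and Corollary \ref{C.LastNonzeroHomology}; everything matches. This is noteworthy because the paper contains a Remark at the end of Section \ref{S.Homology} stating that the authors do not see how to obtain Propositions \ref{P.nojumps}, \ref{P.vanishing} and \ref{Bounds on homology} as direct consequences of Theorem \ref{T.homology} --- your argument does exactly that, and as a bonus yields the unified formula $t_{(a,b)}=\min\bigl(a-2,\lfloor(a+b-4)/3\rfloor\bigr)$, which subsumes both Proposition \ref{Bounds on homology} and Corollary \ref{C.LastNonzeroHomology}. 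The trade-off is that your route needs the full strength of Theorem \ref{T.homology} (whose proof is the technical heart of the section), whereas the paper's route only needs (FCH) and one falling chain per length; also, your argument leans on the precise convention that $\binom{n}{k}=0$ whenever $k<0$ or $k>n$ (including negative $n$) with the single exception $\binom{-1}{-1}=1$ --- this is exactly how the paper itself evaluates \eqref{Eq.homology} in Corollary \ref{C.LastNonzeroHomology}, so the reliance is legitimate, but it deserves an explicit sentence in your write-up.
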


Moreover, in Corollary \ref{C.contractible} we show that the only poset $P(a,b)$, with $2 \leq a \leq b$, whose order complex is contractible, and indeed collapsible, is $P(3,3)$.

One of the most important numerical invariants of a poset $P$ with unique minimal element $\widehat{0}$ and unique maximal element $\widehat{1}$ is its M\"obius number $\mu(P)$ \cite{R64}. It is well known that $\mu(P)$ is the alternating sum of the ranks of the homology groups of the order complex $\Delta(P)$. In particular, it equals the reduced Euler characteristic $\widetilde \chi ( \Delta(P) )$ of $\Delta(P)$. Hence, as a consequence of Theorem \ref{T.homology}, we can derive the following formula for the reduced Euler characteristic of $\Delta ( P(a,b) )$.

\begin{theorem} \label{T.EulerChar}
For every $2 \leq a \leq b$, the reduced Euler characteristic of $\Delta ( P(a,b) )$ is
\begin{equation} \label{Eq.EulerChar}
\widetilde \chi ( \Delta ( P(a,b) ) ) = (-1)^a \cdot 2 \sum_{h=0}^{\lfloor \frac{a}{2} \rfloor -1} (-1)^h \binom{a-2}{h} \binom{b-a}{a-2-2h}.
\end{equation}
\end{theorem}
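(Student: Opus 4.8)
The plan is to reduce Theorem~\ref{T.EulerChar} to a binomial identity through Theorem~\ref{T.homology} and then to evaluate that identity. Since $\Delta(P(a,b))$ has torsion-free reduced homology by Theorem~\ref{T.shellable}, its reduced Euler characteristic is the alternating sum of the ranks of its reduced homology groups. For $2\le a\le b$ the poset $P(a,b)\setminus\{\widehat{0},\widehat{1}\}$ is nonempty (it contains $(1,1)$), so $\widetilde H_{-1}(\Delta(P(a,b));\ZZ)=0$; together with the vanishing in degrees $>a-2$ provided by Theorem~\ref{T.homology}, this gives
\[
\widetilde\chi(\Delta(P(a,b)))=\sum_{i=0}^{a-2}(-1)^i\,\mathrm{rank}\,\widetilde H_i(\Delta(P(a,b));\ZZ).
\]
Substituting \eqref{Eq.homology} and cancelling the common factor $2$, the theorem becomes the identity
\begin{align*}
&\sum_{i=0}^{a-2}(-1)^i\sum_{t=0}^i\binom{a-3-i}{t-1}\left[\binom{i}{t}\binom{b-2-i}{i-t}+\binom{i}{t-1}\binom{b-3-i}{i-t}\right]\\
&\qquad\qquad=(-1)^a\sum_{h=0}^{\lfloor a/2\rfloor-1}(-1)^h\binom{a-2}{h}\binom{b-a}{a-2-2h},
\end{align*}
with the convention $\binom{-1}{-1}=1$.

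To prove this identity I would use generating functions. The right-hand side is the coefficient of $x^{a-2}$ in $(1+x)^{b-a}(1-x^2)^{a-2}$, which, since $(1-x^2)^{a-2}=(1-x)^{a-2}(1+x)^{a-2}$, equals $(-1)^a$ times $[x^{a-2}]\big((1+x)^{b-2}(1-x)^{a-2}\big)$; so the task is to show that the left-hand side equals $[x^{a-2}]\big((1+x)^{b-2}(x-1)^{a-2}\big)$. On the left I would express each Betti number as a coefficient as well, using $\sum_t\binom{i}{t}\binom{b-2-i}{i-t}w^t=[z^i]\big((1+wz)^i(1+z)^{b-2-i}\big)$ and folding in the factor $\binom{a-3-i}{t-1}=[v^{t-1}](1+v)^{a-3-i}$; merging the two bracketed summands first, say by Pascal's rule $\binom{b-2-i}{i-t}=\binom{b-3-i}{i-t}+\binom{b-3-i}{i-t-1}$, makes the two resulting pieces uniform. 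After these substitutions the left-hand side takes the shape $\sum_i(-1)^i$ of a double coefficient extraction of $G(u,y)^i H(u,y)$ (plus an analogous second term) for fixed rational functions $G,H$ of two auxiliary variables. Interchanging the finite sum over $i$ with the coefficient extractions turns the $i$-sum into a geometric series $\sum_i(-G/uy)^i$, and a residue computation — the relevant pole sitting at $y=-u/(1+u)$ once one observes that $uy+G$ factors nicely — should collapse the whole expression to the desired coefficient $[x^{a-2}]\big((1+x)^{b-2}(x-1)^{a-2}\big)$.

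The hard part is the bookkeeping around the summation conventions. Formula~\eqref{Eq.homology} uses the convention that out-of-range binomial coefficients vanish, with the single exception $\binom{-1}{-1}=1$, whereas the power series $(1+v)^{a-3-i}$ represents the extended binomial coefficients; the two agree for $0\le i\le a-3$ but not for $i=a-2$, so one must peel off the top-degree contribution $\mathrm{rank}\,\widetilde H_{a-2}=2\binom{b-a}{a-2}$ separately and check that extending the $i$-sum over all of $\NN$, as the geometric series does, introduces nothing spurious. Once the encoding is arranged so that these boundary effects are controlled, the residue calculation is routine. As a sanity check one can verify the formula by hand for small values, e.g.\ $\widetilde\chi(\Delta(P(2,b)))=2$ and $\widetilde\chi(\Delta(P(3,3)))=0$, the latter being consistent with the collapsibility of $P(3,3)$ noted above. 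An alternative route avoids generating functions altogether: every lower interval $[\widehat{0},(c,d)]$ of $P(a,b)$ equals $P(c,d)$, so the defining recursion of the M\"obius function expresses $\mu(P(a,b))$ in terms of the numbers $\mu(P(c,d))$ with $(c,d)<\widehat{1}$, and one checks by induction on $a+b$ that the right-hand side of \eqref{Eq.EulerChar} satisfies the same recursion and the base cases.
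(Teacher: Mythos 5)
Your reduction of the theorem to a binomial identity is sound and matches the first step of the paper's own argument: since the homology is torsion-free, vanishes above degree $a-2$, and $\Delta(P(a,b))\neq\emptyset$, the reduced Euler characteristic is $\sum_{i=0}^{a-2}(-1)^i\,\mathrm{rank}\,\widetilde H_i$; and your identification of the right-hand side of \eqref{Eq.EulerChar} as $2\,[x^{a-2}]\big((x-1)^{a-2}(1+x)^{b-2}\big)$ is correct (it checks out against $\widetilde\chi(\Delta(P(2,b)))=2$ and $\widetilde\chi(\Delta(P(3,b)))=6-2b$). The genuine gap is that the identity itself is never proved. The entire mathematical content of the theorem is the evaluation of the alternating sum of \eqref{Eq.homology}, and at exactly that point you write that a residue computation ``should collapse the whole expression to the desired coefficient'' --- the computation is asserted, not performed. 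The delicate points you yourself flag (the convention $\binom{-1}{-1}=1$ versus the extended binomial coefficients produced by $(1+v)^{a-3-i}$ at $i=a-2$, and whether replacing the finite $i$-sum by a full geometric series introduces spurious contributions) are precisely where such manipulations fail, and none of them is resolved. The alternative route via the M\"obius recursion has the same status: verifying that the closed form satisfies $\mu(P(a,b))=-\sum_{(c,d)<\widehat 1}\mu(P(c,d))$ is itself a nontrivial double-sum identity that is not checked.

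For comparison, the paper closes exactly this gap with a bivariate generating-function computation: it forms $\bar f(u,v)=\sum_{a,b}\widetilde\chi\big(\Delta(P(a,b))\big)u^av^b$ from the formula of Theorem \ref{T.homology}, evaluates the inner sums with the identity $\sum_{n}\binom{n}{k}x^n=x^k/(1-x)^{k+1}$, sums the resulting series over $i$ in closed form to get $\frac{2u^2v^2}{2uv-u-v+1}$ plus a term supported on $a>b$, and then shows that the right-hand side of \eqref{Eq.EulerChar} has the same generating function $\frac{2u^2v^2}{2uv-u-v+1}$. Your single-variable coefficient-extraction scheme could plausibly be pushed through to the same end, but as written the proof stops exactly where the work begins.
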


The posets $P(a_1,\ldots, a_n)$ can be seen as examples of the following general construction. Let $P_1,\ldots,P_n$ be posets. Assume that, for $1 \leq i \leq n$, the poset $P_i$ has unique minimal element $\widehat{0}_i$ and unique maximal element $\widehat{1}_i$. For every two elements $(a_1,\ldots,a_n), (b_1,\ldots, b_n)$ in the Cartesian product $P_1 \times \cdots \times P_n$ we set $(a_1,\ldots,a_n) \leq_p (b_1,\ldots, b_n)$ if
$(a_1,\ldots,a_n) = (b_1,\ldots, b_n)$ or, for every $i$, either $a_i = b_i = \widehat{0}_i$ or $a_i < b_i$ in $P_i$. We write $P_1\times_p \cdots \times_p P_n$ for the set of all $(a_1,\ldots, a_n) \in P_1 \times \cdots \times P_n$ with $(a_1,\ldots, a_n) \leq_p (\widehat{1}_1,\ldots, \widehat{1}_n)$. If we denote by $C_{k+1}$ a chain of length $k$, then it is easily seen that $P(a_1,\ldots,a_n) \cong C_{a_1+1} \times_p \cdots \times_p C_{a_n+1}$. For this reason we call $\times_p$ the \textit{proper division product}. Note that $(P_1 \times_p P_2) \times_p P_3 = P_1 \times_p P_2 \times_p P_3$. A natural question is if Theorem \ref{T.shellable} and Proposition \ref{P.nojumps} can be extended to this setting.

Let $P$ and $Q$ be two (pure) shellable posets with unique maximal and unique minimal element.

\begin{question}
Is $\Delta(P \times_p Q)$ shellable?
\end{question}

\begin{question}
Assume $\Delta(P \times_p Q)$ is nonempty.
Is there an integer $t_{P;Q} \geq 0$ such that $H_i(\Delta(P \times_p Q);\ZZ) \neq 0$ if and only if $0 \leq i \leq t_{P;Q}$?
\end{question}

We have tested both questions when $P$ and $Q$ are Boolean lattices on a reasonable sized set of examples. For all those examples the answer to both questions is affirmative. If we denote by $B_i$ the Boolean lattice on $i$ elements, then we have:

\bigskip

\begin{center}
\begin{tabular}{ c | l }
$P \times_p Q$ & $(\rank H_i ( \Delta(P \times_p Q) ) : i\geq0)$ \\ \hline
$B_2 \times_p B_6$ & (15,30,40,30,13,0,\dots) \\ \hline
$B_2 \times_p B_7$ & (17,42,70,70,42,15,0,\dots) \\ \hline
$B_3 \times_p B_6$ & (1,1461,1275,705,172,0,\dots) \\ \hline
$B_3 \times_p B_7$ & (1,3381,3822,2940,1218,232,0,\dots) \\ \hline
\end{tabular}
\end{center}

\bigskip

The structure of the paper is as follows. In Section \ref{S.CL-Shellability} we recall the relation between recursive atom ordering and
CL-shellability and prove Theorem \ref{T.shellable} and Proposition \ref{P.notshellable}.
The rest of the paper deals with the case $n=2$, i.e. the posets $P(a,b)$. In Section \ref{S.Homology} we study the homology of $\Delta ( P(a,b) )$, by characterizing and counting the falling chains of $P(a,b)^\ast$. A first qualitative result concerns the vanishing of the homology, see Propositions \ref{P.vanishing} and \ref{Bounds on homology}. Our proofs of Theorem \ref{T.homology} and Proposition \ref{P.nojumps} depend heavily on the labeling induced by the recursive atom ordering from the proof of Theorem \ref{T.dualCLshellable}.
Moreover, in Corollary \ref{C.contractible} we show that the only poset $P(a,b)$, with $2 \leq a \leq b$, whose order complex is contractible, and indeed collapsible, is $P(3,3)$.
In Section \ref{S.EulerChar} we prove Theorem \ref{T.EulerChar} using generating function techniques. Note that the formula from
Theorem \ref{T.EulerChar} is much simpler than the alternating sum of the rank of the homologies given in Theorem \ref{T.homology}.
From this, in Corollary \ref{C.ZeroEulerChar}, we deduce that $\widetilde \chi ( \Delta ( P(a,b) ) ) = 0$ if $a=b$ and $a$ is odd.

\section{CL-Shellability} \label{S.CL-Shellability}

In this section we prove that the order complex of the poset $ P(a_1,\ldots,a_n)$ is vertex decomposable, hence shellable,
by showing that the dual poset $P(a_1,\ldots,a_n)^\ast$ is $\mathrm{CL}$-shellable.
Indeed, we will show that $P(a_1,\ldots,a_n)^\ast$ admits a recursive atom ordering which, by \cite[Thm. 5.11]{BW96}, is equivalent to
show that the poset is $\mathrm{CL}$-shellable.

Before defining the recursive atom ordering, we need to introduce some more poset terminology. Let $P$ be a poset with order relation $\leq$.  We say that $p \in P$ \textit{covers} $q \in P$, and use the notation $q \rightarrow p$, if $q < p$ and there is no $q' \in P$ with $q < q' < p$. The \textit{atoms} of  a poset $P$ with unique minimal element $\widehat{0}$ are the elements of $P$ that cover $\widehat{0}$.
For $q \leq p$ in $P$, we define the \textit{interval} $[q,p] := \{ q' \in P~:~q \leq q' \leq p\}$, which is a poset with the induced order, unique minimal element $q$ and unique maximal element $p$.
Finally, the \textit{length} of a chain in $P$ is the number of its elements minus one and the \textit{length} of $P$, denoted $\ell(P)$, is the maximal length of its chains.

\smallskip

Some immediate properties of $P(a_1,\dots,a_n)$ are the following:
\begin{itemize}
\item If $a_1,\ldots, a_n \geq 1$ then $P(a_1,\dots,a_n)$ has $a_1 \cdots a_n + 1$ elements, since the elements of $P(a_1,\dots,a_n)$, except the top element, are exactly the elements of the classical divisibility poset with top element $(a_1-1,\dots,a_n-1)$ but with a different partial order;
\item $\ell(P(a_1,\dots,a_n))=\max_{1 \leq i \leq n} \{a_i\}$. In fact, assume that $\max_{1 \leq i \leq n} \{ a_i \} = a_n$. Then the chain
    \[
    (0,\dots,0,0) \rightarrow (0,\dots,0,1) \rightarrow (0,\dots,0,2) \rightarrow \cdots \rightarrow (0,\dots,0,a_n-1) \rightarrow (a_1,\dots,a_{n-1},a_n)
    \]
    has length $a_n$. There are no longer chains since every covering relation is of the form $(c_1,\dots,c_n) \rightarrow (d_1,\dots,d_n)$, where for every $1 \leq i \leq n$, either $c_i = d_i = 0$ or $c_i < d_i$ and $c_j=d_j-1$ for some $1 \leq j \leq n$.
\end{itemize}

\begin{definition}\label{def recursive atom ordering}
  Let $P$ be a poset with unique minimal element $\widehat{0}$ and unique maximal element $\widehat{1}$.
  The poset $P$ admits a \textit{recursive atom ordering} if $\ell(P) \leq 1$ or $\ell(P)>1$ and there is a linear ordering $\preceq$ of the atoms of $P$ satisfying the following two conditions:
\begin{enumerate}
\item[(i)] for all atoms $p$ of $P$, the interval $[p,\widehat{1}]$ admits a recursive atom ordering in which the atoms of $[p,\widehat{1}]$ that belong to $[q,\widehat{1}]$, for some $q \prec p$, come first;
\item[(ii)] for all atoms $p \prec p'$ and elements $p,p' < q$ of $P$, there exist an atom $p'' \prec p'$ of $P$ and an atom $q'$ of $[p',\widehat{1}]$ such that $p''<q'\leq q$.
\end{enumerate}
\end{definition}

Theorem \ref{T.shellable} is a consequence of the following key result.

\begin{theorem} \label{T.dualCLshellable}
  For every $(a_1,\ldots,a_n) \in \mathbb N^n$, the poset $P(a_1,\ldots,a_n)^\ast$ admits a recursive atom ordering and hence is $\mathrm{CL}$-shellable.
\end{theorem}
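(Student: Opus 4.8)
The plan is to prove the stronger statement — that $P(a_1,\ldots,a_n)^*$ admits a recursive atom ordering — by induction on $\ell\big(P(a_1,\ldots,a_n)\big)$. First I would reduce to the case $a_1,\ldots,a_n\geq 1$: if some $a_i=0$, the $i$-th coordinate is identically $0$ on $P(a_1,\ldots,a_n)$, so deleting it changes neither the poset nor its dual. If $\ell:=\max_i a_i\leq 1$, i.e.\ all $a_i=1$, then $P(a_1,\ldots,a_n)^*$ has length $\leq 1$ and admits a recursive atom ordering by the first clause of Definition \ref{def recursive atom ordering}; this is the base case, so from now on $\ell\geq 2$. The recursion rests on the following structural fact, which I would establish first: if $d=(d_1,\ldots,d_n)$ is a coatom of $P:=P(a_1,\ldots,a_n)$, equivalently an atom of $P^*$, then the order ideal $\{e\in P:e\leq d\}$ consists of $d$ together with the proper divisors of $d$, and with the induced order it is exactly $P(d_1,\ldots,d_n)$. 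Dualizing, the interval above the atom $d$ in $P^*$ satisfies $[d,\widehat 1_{P^*}]_{P^*}\cong P(d_1,\ldots,d_n)^*$; since $d$ is a proper divisor of $(a_1,\ldots,a_n)$ we have $d_i\leq a_i-1$ for all $i$, hence $\ell\big(P(d_1,\ldots,d_n)\big)=\max_i d_i\leq \ell-1<\ell$, and the induction hypothesis supplies a recursive atom ordering of this interval.

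Next I would fix the ordering of the atoms of $P^*$ (the coatoms of $P$): take \emph{any} linear order $\preceq$ whose first element is $q_0:=(a_1-1,\ldots,a_n-1)$, which is a coatom of $P$ precisely because $\max_i a_i\geq 2$. The point is that essentially no further choice matters, thanks to the elementary observation — proved by a coordinatewise check — that every coatom $e$ of an ideal $\{e\in P:e\leq d\}\cong P(d_1,\ldots,d_n)$, with $d$ a coatom of $P$, satisfies $e<_P q_0$: in each coordinate either $a_i=1$, forcing $d_i=e_i=0=(q_0)_i$, or $a_i\geq 2$ and $e_i\leq\max(d_i-1,0)\leq a_i-2<(q_0)_i$, and the inequality is strict in at least one coordinate since $d\neq\widehat 0$. (For the homology computations later in the paper one would commit to a concrete such order, e.g.\ a lexicographic one, but for the present theorem only the position of $q_0$ is used.)

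With this, condition (i) of Definition \ref{def recursive atom ordering} is immediate: for the atom $q_0$ there is no earlier atom, so the clause is vacuous and $[q_0,\widehat 1_{P^*}]_{P^*}$ admits a recursive atom ordering by induction; for any atom $p\neq q_0$ one has $q_0\prec p$, and by the observation every atom of $[p,\widehat 1_{P^*}]_{P^*}$ (a coatom of the ideal $\{e\leq p\}$) already lies below $q_0$ in $P$, hence in $[q_0,\widehat 1_{P^*}]_{P^*}$, so ``the atoms lying under an earlier atom come first'' is automatically satisfied by the recursive atom ordering furnished by induction. For condition (ii), let $p\prec p'$ be atoms of $P^*$ and let $q$ satisfy $p,p'<_{P^*}q$, i.e.\ $q<_P p$ and $q<_P p'$. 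Then $q$ lies strictly below the top $p'$ of the finite poset $\{e\in P:e\leq p'\}$, so by the standard fact that every non-top element of a finite poset lies below some coatom there is a coatom $q'$ of $\{e\leq p'\}$ with $q\leq_P q'$; such a $q'$ is an atom of $[p',\widehat 1_{P^*}]_{P^*}$, and $q'<_P q_0$ by the observation. Since $q_0$ is the first atom and $p'\neq q_0$ (because $p\prec p'$), we have $q_0\prec p'$; taking $p'':=q_0$ then gives $p''<_{P^*}q'\leq_{P^*}q$, which is exactly (ii). This completes the induction, and $\mathrm{CL}$-shellability of $P(a_1,\ldots,a_n)^*$ follows by \cite[Thm. 5.11]{BW96}.

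I expect the only genuinely non-formal step to be the structural identification $\{e\in P:e\leq d\}\cong P(d_1,\ldots,d_n)$ — in particular nailing down the covering relations of $P$ — which, however, follows directly from the explicit description of proper divisibility recalled before the statement. Once that identification and the bound $e<_P q_0$ on the coatoms of such ideals are in place, the recursive atom ordering axioms (i) and (ii) are essentially forced; the conceptual content is simply that declaring $q_0$ the first atom makes every other atom dominate a large enough part of $P$ that no finer choice of ordering is required.
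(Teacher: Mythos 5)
Your proof is correct, and its skeleton coincides with the paper's: induction on $\ell(P(a_1,\ldots,a_n))$, the identification of the interval above an atom $d$ of $P(a_1,\ldots,a_n)^*$ with $P(d_1,\ldots,d_n)^*$, the choice of $q_0=(a_1-1,\ldots,a_n-1)$ as the first atom, and the verification of condition (i) via the observation that \emph{every} atom of $[p,\widehat 1]$ already lies in $[q_0,\widehat 1]$. Where you genuinely diverge is condition (ii). The paper writes $p'=(a_1-k_1,\ldots,a_n-k_n)$ and $q=(a_1-k_1-k_1',\ldots,a_n-k_n-k_n')$ and then \emph{constructs} the required atom $q'$ of $[p',\widehat 1]$ explicitly, with a case split on whether some increment $k_t'$ equals $1$ and, in the harder case, a formula involving $\min_r\{k_r':k_r'\neq 0\}$ whose correctness has to be checked coordinate by coordinate. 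You instead get $q'$ for free from the generic fact that any $q<_P p'$ lies below some coatom of the ideal $\{e\le_P p'\}$, and then invoke your single structural observation (every coatom of an ideal below a coatom of $P$ properly divides $q_0$) to conclude $q_0<_*q'\le_*q$. This eliminates the most technical step of the paper's argument and, as a by-product, shows that \emph{any} atom ordering beginning with $q_0$ is recursive, whereas the paper commits to the dual lexicographic order (a choice that is only needed later, for the chain-labeling used in the homology computations of Section \ref{S.Homology}). Your coordinatewise verification of the observation, including the $a_i=1$ coordinates that you choose not to delete, is sound, as is the reduction of the $a_i=0$ case and the length-$\le 1$ base case.
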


\begin{proof}
  We denote by $\leq_*$ the order on $P(a_1,\ldots,a_n)^\ast$, that is the dual of the order on $P(a_1,\ldots,a_n)$.

  We proceed by induction on $n$ and $\ell(P(a_1,\ldots, a_n))$.
  If $n=1$, the poset is a single chain, which is easily checked to admit a recursive atom ordering.
  Assume $n>1$. Suppose that $a_i \leq 1$ for some $i$.
  Then $P(a_1,\ldots,a_n)^\ast$ can be identified with $P(a_1,\dots,\widehat{a_i},\dots,a_n)^\ast$, in which we remove the $i$\textsuperscript{th} component from all elements.
  In fact, all the elements of $P(a_1,\dots,a_n)^\ast$, except possibly the bottom element, have the $i$\textsuperscript{th} coordinate equal to zero.
  Thus, by induction on $n$, we know that $P(a_1,\ldots,a_n)^\ast$ admits a recursive atom ordering.

  Hence we may assume that $a_h \geq 2$ for all $h$.

  We will frequently use the following fact.

\begin{itemize}
\item[($\star$)] for every $(b_1,\dots,b_n) \in P(a_1,\dots,a_n)^\ast \setminus \{ (0,\ldots, 0) \}$ and any atom $(c_1,\dots,c_n)$
  of the interval $[(b_1,\dots,b_n),(0,\ldots, 0)]$, there is an index $1 \leq j \leq n$ such that $c_j=b_j-1$.
\end{itemize}

  Now, for every element $(b_1,\dots,b_n)$ of $P(a_1,\dots,a_n)^\ast$, we order the atoms of $[(b_1,\dots,b_n),(0,\ldots, 0)]$ by the dual $\preceq$ of the lexicographic order: for any two atoms $(c_1,\dots,c_n)$ and $(d_1,\dots,d_n)$, we set $(c_1,\dots,c_n) \preceq (d_1,\dots,d_n)$ if and only if either $c_1>d_1$ or there exists $1 \leq i \leq n$ such that $c_h=d_h$ for every $1 \leq h \leq i$ and $c_{i+1}>d_{i+1}$. In particular, the least atom of the interval $[(b_1,\dots,b_n),(0,\ldots,0)]$ is $(\bar b_1,\dots, \bar b_n)$, where $ \bar b_h = b_h-1$ if $b_h \neq 0$ and $\bar b_h=0$ if $b_h=0$.

  For every $(b_1,\ldots, b_n) \in P(a_1,\ldots,a_n)^\ast \setminus \{(a_1,\ldots,a_n)\}$, the interval $[(b_1,\ldots,b_n), (0,\ldots,0)]$ is easily identified with the poset $P(b_1,\ldots, b_n)^\ast$ and hence, by induction on the length, we may assume that the dual lexicographic order is a recursive atom ordering
  for all intervals $[(b_1,\ldots,b_n),(0,\ldots,0)]$ in $P(a_1,\ldots, a_n)^\ast$, with $(b_1,\ldots,b_n) \neq (a_1,\ldots, a_n)$.
  Thus it suffices to verify conditions (i) and (ii) from Definition \ref{def recursive atom ordering} for the ordering of the
  atoms of $P(a_1,\ldots,a_n)^\ast$ only.

  \begin{itemize}
    \item[(i)]
      Let $p_1$ be the least atom of $P(a_1,\dots,a_n)^\ast$. Then $p_1=(a_1-1,\dots,a_n-1)$. Let $(b_1,\dots,b_n) \neq p_1$ be another atom of $P(a_1,\dots,a_n)^\ast$. Notice that, every atom $(c_1,\dots,c_n)$ of the interval\break $[(b_1,\dots,b_n),(0,\ldots,0)]$ satisfies $c_h < b_h \leq a_h-1$ if $b_h \neq 0$, and $c_h=b_h=0 \leq a_h-1$ if $b_h=0$.
      Hence $(c_1,\dots,c_n)$ also belongs to the interval $[p_1, (0,\ldots,0)]$.
      Thus $p_1<(c_1,\dots,c_n)$ and hence condition (i) is fulfilled.
    \item[(ii)]
      Let $p \prec p'$ be atoms of $P(a_1,\dots,a_n)^\ast$. Then
      $p' = (a_1-k_1,\dots,a_n-k_n)$, with $k_h \geq 1$ for every $h=1,\dots,n$. Let $q$ be another element of $P(a_1,\ldots, a_n)^\ast$ such that $p,p' <_\ast q$. Then
      \[
      q=(b_1,\dots,b_n)=(a_1-k_1-k_1',\dots,a_n-k_n-k_n'),
      \]
      where $k_h'=0$ if and only if $a_h-k_h=0$ and $k'_h \geq 1$ otherwise.
      Since $a_h \geq 2$ for every $h$, there exists $s$ such that $a_s-k_s>0$ and hence $k'_s \geq 1$.
      We distinguish between two cases.

      If $k'_t=1$ for some $t$, then we set $q'=q$. Clearly, by ($\star$), $q$ is an atom of $[p',(0,\ldots,0)]$ and $p_1=(a_1-1,\dots,a_n-1) <_* q' \leq_* q$.

      Otherwise, if for every $h$, either $k'_h=0$ or $k'_h>1$, then we set $q'=(c_1,\dots,c_n)$, where for every $h=1,\dots,n$,
      \[
        c_h =
          \begin{cases}
            0 & \text{if } k'_h=0 \\
            a_h-k_h-k_h'+\min_{1 \leq r \leq n} \{ k_r' : k'_r \neq 0 \}-1 & \text{if } k'_h>1
          \end{cases}.
      \]

     Notice that $k'_s \neq 0$. Again $q'$ is an atom of $[p',(0,\dots,0)]$ by ($\star$), since $c_h=0$ when $a_h-k_h=0$, $c_h \leq a_h-k_h-1$ when $k'_h>1$, and $c_t=a_t-k_t-1$, where $t$ is such that $k'_t=\min_{1 \leq r \leq n} \{ k_r' : k'_r \neq 0 \}$.

     On the other hand, $q' \leq_* q$, since $c_h=b_h=0$ if $b_h=0$, and $\min_{1 \leq r \leq n} \{ k_r' : k'_r \neq 0 \}-1 > 0$, hence $b_h<c_h$, if $b_h>0$.
     Furthermore, $p'':=p_1 \prec p'$ because $c_h < a_h-1$ for every $h$ (since either $c_h=0$, if
     $k'_h=0$, or $-k_h-k_h'+\min_{1 \leq r \leq n} \{ k_r' : k'_r \neq 0 \} \leq -1$, if $k'_h>1$) and $p'' <_\ast q'$. This concludes the proof. \qedhere
    \end{itemize}
\end{proof}

Now we can prove Proposition \ref{P.notshellable} showing that, indeed, Theorem \ref{T.dualCLshellable} does not hold for $P(a_1,\dots,a_n)$.

\begin{proof}[Proof of Proposition \ref{P.notshellable}]
  We prove that the poset $P(4,4)$ in Figure \ref{F.nonCL} does not admit any recursive atom ordering, by showing that no ordering on the atoms of $P(4,4)$ does fulfill condition (ii) of Definition \ref{def recursive atom ordering}.
  Let $\preceq$ be a linear order on the atoms $(1,0), (1,1), (0,1)$ of $P(4,4)$. Since $P(4,4)$ is
  invariant under switching coordinates, we may assume that $p=(1,0) \preceq p'=(0,1)$. We consider $q=(2,3)$. Clearly $p,p'<q$.
  For every $p'' \prec p'$ and for every atom $q'$ of $[p',\widehat 1]$, either $p'' \nless q'$ or $q' \nleqslant q$.

  Thus $P(4,4)$ does not admit any recursive atom ordering and hence is not $\mathrm{CL}$-shellable.
  Nevertheless, by Theorem \ref{T.dualCLshellable}, we know that $P(4,4)^*$ is $\mathrm{CL}$-shellable and hence its order complex $\Delta(P(4,4)^\ast) \cong \Delta(P(4,4))$ is shellable.
\end{proof}

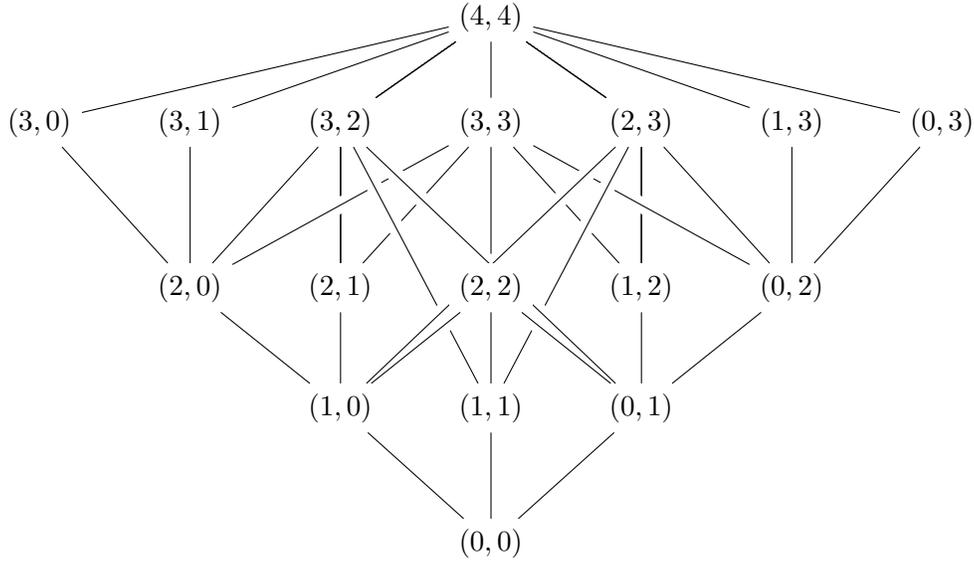
\begin{figure}[ht!]
\begin{tikzpicture}%[scale=0.8]
  \node (max) at (0,5.6) {$(4,4)$};
  \node (o) at (6,4.2) {$(0,3)$};
  \node (n) at (4,4.2) {$(1,3)$};
  \node (m) at (2,4.2) {$(2,3)$};
  \node (l) at (0,4.2) {$(3,3)$};
  \node (k) at (-2,4.2) {$(3,2)$};
  \node (j) at (-4,4.2) {$(3,1)$};
  \node (i) at (-6,4.2) {$(3,0)$};
  \node (h) at (4,2) {$(0,2)$};
  \node (g) at (2,2) {$(1,2)$};
  \node (f) at (0,2) {$(2,2)$};
  \node (e) at (-2,2) {$(2,1)$};
  \node (d) at (-4,2) {$(2,0)$};
  \node (c) at (2,0.4) {$(0,1)$};
  \node (b) at (0,0.4) {$(1,1)$};
  \node (a) at (-2,0.4) {$(1,0)$};
  \node (min) at (0,-1.4) {$(0,0)$};
  \draw (min) -- (a) -- (d) -- (i) -- (max) -- (l) -- (f) -- (b) -- (min) -- (c) -- (h) -- (o) -- (max)
  (a) -- (e) -- (k) -- (max) -- (m) -- (g) -- (c)
  (d) -- (j) -- (max) -- (n) -- (h) -- (m) -- (max) -- (k) -- (d)
  (e) -- (l) -- (g)
  (e) -- (k)
  (g) -- (m);
  \draw[preaction={draw=white, -,line width=6pt}] (d) -- (l) -- (h);
  \draw[preaction={draw=white, -,line width=6pt}] (m) -- (b) -- (k);
  \draw[preaction={draw=white, -,line width=6pt}] (c) -- (f) -- (a)
  (a) -- (m)
  (k) -- (c);
  \node[rectangle,fill=white] at (0,2) {$(2,2)$};
\end{tikzpicture}
\caption{The poset $P(4,4)$} \label{F.nonCL}
\end{figure}

\section{Homology of the order complex of $P(a,b)$} \label{S.Homology}

In this section we study the simplicial homology groups of $\Delta (  P(a,b) ) \cong \Delta (  P(a,b)^* )$ with coefficients in $\mathbb Z$. Without loss of generality, we may assume $a \leq b$.

For this we construct, from a recursive atom ordering of a poset $P$ with unique minimal element $\widehat{0}$ and unique maximal element $\widehat{1}$, a
labeling $\lambda(q \rightarrow p)$ of the cover relations of $P$ by integers. This construction is contained in the proof of \cite[Thm. 3.2]{BW83}, showing that a poset with recursive atom ordering is $\mathrm{CL}$-shellable. Note that \cite{BW83} deals with pure posets only but, as noted in \cite{BW96}, the same construction goes through in the non-pure case. For our purposes we only need a part of this construction.

\begin{construction}
  \label{recursiveCL}
  Let $P$ be a poset with unique minimal element $\widehat{0}$ and unique maximal element $\widehat{1}$. Assume that $P$ admits a recursive atom ordering and $\preceq$ is the linear ordering of the atoms of $P$. First we choose a labeling $\lambda$ of the cover relations $\widehat{0} \rightarrow p$
  by integers such that $\lambda(\widehat{0} \rightarrow p) < \lambda(\widehat{0} \rightarrow p')$ if $p \prec p'$.
  If $\ell(P)>1$, let $F(p')$ be the set of all atoms of $[p',\widehat{1}]$ that cover some atom $p \prec p'$ of $P$.
  Then we choose the labeling as follows:
  \begin{gather*}
    \mbox{ if } q \in F(p'), \mbox{ then } \lambda(p' \rightarrow q) < \lambda(\widehat 0 \rightarrow p'),\\
    \mbox{ if } q\notin F(p'), \mbox{ then } \lambda(p' \rightarrow q) > \lambda(\widehat 0 \rightarrow p').
  \end{gather*}
\end{construction}

By \cite[Thm. 5.9]{BW96}, given a labeling $\lambda$ on $P$, from Construction \ref{recursiveCL}:

\begin{itemize}
\item[(FCH)] The rank of the $i$\textsuperscript{th} reduced homology group
  of $\Delta(P)$ with integer coefficients equals the number of
  chains $\widehat{0} = p_0 \rightarrow p_1 \rightarrow \cdots
  \rightarrow p_{i+1} \rightarrow p_{i+2} = \widehat{1}$ of length $i+2$ in $P$ for which
  $\lambda(p_0 \rightarrow p_1) > \cdots > \lambda(p_{i+1} \rightarrow p_{i+2})$.
\end{itemize}

The latter chains are called \textit{falling}. We use this principle for
determining the reduced homology groups of a poset with recursive atom ordering and we refer to it as
the \textit{Falling-Chain-Homology principle} or (FCH) for short.

In particular, $\rank H_0 ( \Delta(P); \ZZ )$ is one more than the number of falling chains of length $2$.

\smallskip
From now on, we consider $P(a,b)^*$ equipped with the recursive atom ordering from
Theorem \ref{T.dualCLshellable}. Let $\lambda$ be the labeling of the edges
of $P(a,b)^*$ induced through Construction \ref{recursiveCL} by this ordering.

We call $(c,d) \in \mathbb N^2$ a \textit{border elements} of $P(a,b)^\ast$ if
it has one of the forms $(1,k)$, $(k,1),(0,k)$ or $(k,0)$ for some $k \geq 2$.

\begin{lemma}\label{Falling chains}
 Let $m:\widehat{0} = (a,b) = p_0 \rightarrow p_1 \rightarrow \cdots \rightarrow p_k = (0,0) = \widehat{1}$ be
 a chain of length $k \geq 2$ in $P(a,b)^\ast$.  Then $m$ is falling with respect to $\lambda$ if and only if it satisfies the
 following conditions:
 \begin{enumerate}
   \item[(i)] $p_{i+1}$ is not the lexicographically least atom of $[p_i,(0,0)]$ for $0 \leq i \leq k-2$,
   \item[(ii)] $p_i$ is not a border element for $1 \leq i \leq k-2$.
 \end{enumerate}

 In particular, by condition {\rm (i)}, if $m$ is falling, $p_i=(c_i,d_i)$ is not a border element and $i \leq k-2$, then $p_{i+1}$ cannot be the element $(c_i-1,d_i-1)$.
\end{lemma}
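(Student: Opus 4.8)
The plan is to unwind the definition of the labeling $\lambda$ from Construction \ref{recursiveCL} along the chain $m$, comparing consecutive labels $\lambda(p_i \rightarrow p_{i+1})$ and $\lambda(p_{i+1} \rightarrow p_{i+2})$, and to show that the descent $\lambda(p_i \rightarrow p_{i+1}) > \lambda(p_{i+1} \rightarrow p_{i+2})$ holds at every step $i$ (for $0 \le i \le k-2$) precisely when the two stated conditions hold. I would first set up notation: since the recursive atom ordering on each interval $[(c,d),(0,0)] \cong P(c,d)^\ast$ is the dual-lexicographic order, $\lambda$ is determined by Construction \ref{recursiveCL} applied recursively, and the two labeling rules ($\lambda(p' \rightarrow q) < \lambda(\widehat 0 \rightarrow p')$ iff $q \in F(p')$) are the only facts needed. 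The key observation I would isolate as a first step is the following local description of $F$: inside an interval $[p_i,(0,0)]$, write $p_i = (c_i,d_i)$ and let $a'$ be an atom of $[p_{i+1},(0,0)]$; then $a'$ lies in $F(p_{i+1})$ (as computed in $[p_i,(0,0)]$) if and only if $a'$ is below some atom of $[p_i,(0,0)]$ that is dual-lex-smaller than $p_{i+1}$, which concretely means $a' \le (c_i - 1, d_i - 1)$ or a similar explicit inequality coming from the dual-lex order; the lexicographically least atom below $p_i$ is $(\bar c_i, \bar d_i)$ as noted in the proof of Theorem \ref{T.dualCLshellable}.

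The main body of the argument is then a step-by-step analysis. For the top step ($i=0$), $\lambda(\widehat 0 \rightarrow p_1)$ is compared with $\lambda(p_1 \rightarrow p_2)$: by the labeling rules, $\lambda(p_1 \rightarrow p_2) < \lambda(\widehat 0 \rightarrow p_1)$ iff $p_2 \in F(p_1)$, and I would translate $p_2 \in F(p_1)$ into "$p_2$ lies below an atom of $[\widehat 0,(0,0)]$ lexicographically before $p_1$", which (using that $p_1$ is itself an atom of the full poset and $F$ is computed relative to the ordering of \emph{all} atoms) holds exactly when $p_2$ is not forced to go through the least atom — i.e. condition (i) at $i=0$; condition (ii) is vacuous at the endpoints. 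For an interior step $1 \le i \le k-2$, I compare $\lambda(p_i \rightarrow p_{i+1})$ with $\lambda(p_{i+1} \rightarrow p_{i+2})$. Here I would argue that the descent fails exactly in two situations: either $p_{i+1}$ \emph{is} the dual-lex least atom of $[p_i,(0,0)]$ (forcing $\lambda(p_i \rightarrow p_{i+1})$ to be small, so no descent to the next label is possible — this is the failure of (i)), or $p_i$ is a border element, in which case the interval $[p_i,(0,0)] \cong P(c_i,d_i)^\ast$ has some coordinate $\le 1$, so it collapses to a chain or near-chain and the set $F(p_{i+1})$ degenerates in a way that breaks the label inequality (failure of (ii)). Conversely, when neither failure occurs, one checks directly from the two labeling rules that the needed strict inequality between $\lambda(p_i \rightarrow p_{i+1})$ and $\lambda(p_{i+1} \rightarrow p_{i+2})$ does hold, because $p_{i+1} \in F(p_i)$ can be arranged and $p_{i+2} \in F(p_{i+1})$ fails, or the appropriate combination thereof, yielding $\lambda(p_i \rightarrow p_{i+1}) < \lambda(\widehat 0 \rightarrow p_i)$ and positioning the next label above it after the recursive shift. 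The final "in particular" clause is immediate: if $p_i$ is not a border element and $i \le k-2$, condition (i) says $p_{i+1}$ is not the lex-least atom $(\bar c_i,\bar d_i)=(c_i-1,d_i-1)$ of $[p_i,(0,0)]$ (the last equality because $p_i$ not a border element forces both coordinates $\ge 2$, so neither is $0$).

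The hard part will be step (ii): pinning down exactly how the labeling degenerates when $p_i$ is a border element. The subtlety is that $P(c,d)^\ast$ with, say, $d=1$ is not literally a chain but identifies with $P(c)^\ast$ after deleting a coordinate (as in the proof of Theorem \ref{T.dualCLshellable}), and one must track how that identification interacts with the recursively-built labels $\lambda$ and with the $F$-sets one level up. I expect to handle this by an explicit case split on which of the four border forms $(1,k),(k,1),(0,k),(k,0)$ occurs, computing the atoms of $[p_i,(0,0)]$ and of the relevant $[p_{i+1},(0,0)]$ in each case and checking the label inequality by hand; these are small finite computations but they are where the real content lies, and care is needed to confirm that border elements are the \emph{only} obstruction beyond (i). Everything else is a mechanical unfolding of Construction \ref{recursiveCL} and the dual-lex atom order.
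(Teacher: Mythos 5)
Your overall strategy coincides with the paper's: reduce the descent $\lambda(p_i\rightarrow p_{i+1})>\lambda(p_{i+1}\rightarrow p_{i+2})$ at each step to the membership $p_{i+2}\in F(p_{i+1})$, computed in $[p_i,(0,0)]$, and translate that into conditions (i) and (ii). But two points need repair. The more serious one is that the logic of the converse direction is inverted. By Construction \ref{recursiveCL}, applied inside $[p_i,(0,0)]$ with $p_{i+1}$ as one of its atoms, the descent $\lambda(p_{i+1}\rightarrow p_{i+2})<\lambda(p_i\rightarrow p_{i+1})$ holds precisely when $p_{i+2}\in F(p_{i+1})$; so to show that (i) and (ii) imply falling you must verify that $p_{i+2}\in F(p_{i+1})$ \emph{holds} at every step, whereas you write that ``$p_{i+2}\in F(p_{i+1})$ fails'' and that the next label is positioned \emph{above} $\lambda(p_i\rightarrow p_{i+1})$ --- that argues for the wrong inequality. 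The actual content of the converse, which your ``key observation'' sets up but which you never state or use, is: if $p_i=(c,d)$ has both coordinates $\geq 2$ and $p_{i+1}$ is not the least atom $(c-1,d-1)$ of $[p_i,(0,0)]$, then \emph{every} atom of $[p_{i+1},(0,0)]$ properly divides $(c-1,d-1)$ and hence lies in $F(p_{i+1})$; this is a short computation from the description of atoms in ($\star$). Note also that for the lemma to be true one must read $F(p')$ as the atoms of $[p',\widehat 1]$ that \emph{lie in} $[p,\widehat 1]$ for some earlier atom $p$, as in Definition \ref{def recursive atom ordering}(i) and in \cite{BW96}, rather than literally ``cover'' an earlier atom: in the non-pure setting these differ, and under the literal ``cover'' reading the falling chain $(2,b)\rightarrow(1,1)\rightarrow(0,0)$ of Example \ref{E.a=2,3} would not be falling. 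Your phrase ``is below some atom'' suggests you are already using the correct reading, but you should say so explicitly, since it is what makes your claimed equivalence $a'\in F(p_{i+1})\Leftrightarrow a'\leq(c_i-1,d_i-1)$ correct.

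The second point is that you have misplaced the difficulty in condition (ii). No case split over the four border forms, and no tracking of the identification $P(c,1)^\ast\cong P(c)^\ast$ through the recursively built labels, is needed: a border element has a \emph{unique} maximal proper divisor ($(1,k)$ and $(0,k)$ force $(0,k-1)$, while $(k,1)$ and $(k,0)$ force $(k-1,0)$), so the interval $[p_i,(0,0)]$ has a single atom, which is then automatically its least atom, and (ii) reduces in one sentence to the failure of (i), i.e.\ to $F(p_{i+1})=\emptyset$. This is exactly how the paper argues; the real work in the lemma sits in the converse implication described above, not in the border-element analysis.
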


\begin{proof}
  Let $m$ be a falling chain of length $k$ in $P(a,b)^\ast$. We have to verify (i) and (ii).
  \begin{itemize}
    \item[(i)]  Suppose that, for some $0 \leq i \leq k-2$, $p_{i+1}$ is the least atom of $[p_i,(0,0)]$.
      Then $\lambda(p_{i+1} \rightarrow p_{i+2})>\lambda(p_i \rightarrow p_{i+1})$ by Construction \ref{recursiveCL}.
      Then $m$ is not a falling chain.
    \item[(ii)]
      Suppose that $p_i$ is a border element for some $1 \leq i \leq k-2$.
      Then $p_{i+1}$ is the only atom of $[p_i,(0,0)]$. In particular, it is the least atom, which contradicts (i).
  \end{itemize}

  Conversely, suppose that $m$ satisfies conditions (i) and (ii). Then it is immediate from Construction \ref{recursiveCL} that
  $\lambda(p_{i+1} \rightarrow p_{i+2})<\lambda(p_i \rightarrow p_{i+1})$ for every $0 \leq i \leq k-2$. Hence $m$ is a falling chain.
\end{proof}

We now describe the homology of $\Delta (  P(a,b) )$. Let
\[
t_{(a,b)}= \max \big\{ i : H_i ( \Delta (  P(a,b) ); \mathbb Z ) \neq 0 \big\}.
\]
By convention we set $t_{(a,b)}=-1$ if $H_i ( \Delta (  P(a,b) ); \mathbb Z ) = 0$
for every $i \geq 0$, i.e. $\Delta(P(a,b))$ is empty.

\begin{example} \label{E.a=2,3}
  First we describe the order complex $\Delta(P(a,b))$ in some simple cases, when $0 \leq a \leq 3$.

  \begin{enumerate}
    \item Let $a=0$ or $a=1$.

       If $b \leq 1$ then $ P(a,b)=\emptyset$ and $t_{(a,b)}=-1$. Hence assume $b \geq 2$. Then $P(a,b)$ is a single chain of length $b$
       \[
          (0,0) \rightarrow (0,1) \rightarrow (0,2) \rightarrow \cdots \rightarrow (0,b-1) \rightarrow (a,b)
       \]
       and its order complex is a simplex of dimension $b-2$. Thus $t_{(a,b)}=0$ if $b \geq 2$.

   \item Let $a=2$.

       In Figure \ref{F.a=2} we draw the poset $P(2,b)^\ast$. For $b=2$, the poset $P(2,2)^\ast$ has only three maximal chains,
        $(2,2) \rightarrow (1,1) \rightarrow (0,0)$, $(2,2) \rightarrow (1,0) \rightarrow (0,0)$, $(2,2) \rightarrow (0,1) \rightarrow (0,0)$. By Lemma \ref{Falling chains}, the first chain is not falling and the other two are falling. Moreover, the order complex $\Delta (  P(2,2)^\ast )$ consists of three isolated points.

       Let $b>2$.
       There are exactly two maximal chains of length $2$ in $P(2,b)^\ast$, $m: (2,b) \rightarrow (1,0) \rightarrow (0,0)$ and
       $m':(2,b) \rightarrow (1,1) \rightarrow (0,0)$.
       Note, that the least atom of $[(2,b),(0,0)]$ is $(1,b-1)$. Since $b-1>1$, neither of $(1,0)$ and $(1,1)$ is the least atom of $[(2,b),(0,0)]$. Hence both
       $m$ and $m'$ are falling chains.

       Now we show that no other maximal chain in $ P(2,b)^\ast$ is falling. Consider the maximal chain
       $m_t: (2,b) \rightarrow (1,t) \rightarrow (0,t-1) \rightarrow \cdots \rightarrow (0,1) \rightarrow (0,0)$, with $2 \leq t \leq b-1$.
       Since $(1,t)$ is a border element, the chain $m_t$ is not falling by Lemma \ref{Falling chains}.
       Again, by Lemma \ref{Falling chains}, the remaining chain $(2,b) \rightarrow (0,b-1) \rightarrow (0,b-2) \rightarrow \cdots \rightarrow (0,1) \rightarrow (0,0)$ is not falling, since $(0,b-1)$ is a border element. Thus, by (FCH), for every $b \geq 2$,
       \[
         \rank H_0 ( \Delta (  P(2,b)^\ast ); \mathbb Z ) = 3 \text{ and }
         \rank H_i ( \Delta (  P(2,b)^\ast ); \mathbb Z ) = 0, \text{ for every } i \geq 1.
       \]
       Hence $t_{(2,b)} = 0$, for every $b \geq 2$.

       \begin{figure}[ht!]
         \begin{tikzpicture}%[scale=0.8]
           \node (max) at (0,6) {$(0,0)$};
           \node (e) at (0,5) {$(0,1)$};
           \node (d) at (0,4) {$\vdots$};
           \node (c) at (0,3) {$(0,b-3)$};
           \node (b) at (0,2) {$(0,b-2)$};
           \node (a) at (0,1) {$(1,b-1)$};
           \node (min) at (0,-0.7) {$(2,b)$};
           \node (f) at (2,1) {$(0,b-1)$};
           \node (g) at (-2,1) {$(1,b-2)$};
           \node (h) at (-3.3,1) {$\cdots$};
           \node (i) at (-4.6,1) {$(1,2)$};
           \node (j) at (-5.9,1) {$(1,1)$};
           \node (k) at (-7.2,1) {$(1,0)$};
           \draw (min) -- (a) -- (b) -- (c) -- (d) -- (e) -- (max)
           (min) -- (f) -- (b)
           (min) -- (g) -- (c)
           (min) -- (i) -- (e)
           (min) -- (j) -- (max)
           (min) -- (k) -- (max);
         \end{tikzpicture}
         \caption{The poset $P(2,b)^\ast$} \label{F.a=2}
       \end{figure}
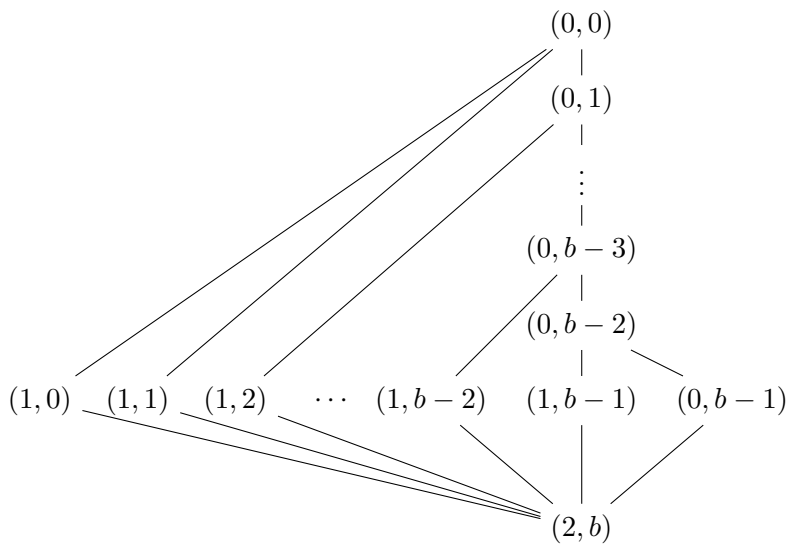

    \item Let $a=3$.

      If $b=3$, there are no falling chains by Lemma \ref{Falling chains}. Hence, by (FCH),
      \[
        \rank H_i ( \Delta (  P(3,3)^\ast ); \mathbb Z ) =0 \text{ if and only if } i \neq 0.
      \]
      Thus $t_{(3,3)} = 0$.

      We may assume $b>3$. Notice that, there are no maximal chains of length $2$.
      Therefore (FCH) implies $\rank H_0 ( \Delta (  P(3,b)^\ast ); \mathbb Z ) =1$.
      In fact, a chain of the form $(3,b) \rightarrow (c,d) \rightarrow (0,0)$ is not maximal for every
      $0 \leq c \leq 2$, $0 \leq d \leq b-1$ and with $c=2$ or $d=b-1$.

      We consider the maximal chains of length $\geq 3$. By Lemma \ref{Falling chains}, if a chain
      $m: p_0=(3,b) \rightarrow p_1 \rightarrow \cdots \rightarrow p_k=(0,0)$ is falling, then
      $p_1$ is not one of the elements $(2,b-1)$ (least atom of $[(3,b),(0,0)]$) and $(1,b-1), (0,b-1), (2,1), (2,0)$ (border elements).
      In other words, if $m$ is falling, then it is of the
      form $m: p_0=(3,b) \rightarrow p_1=(2,t) \rightarrow \cdots \rightarrow p_k=(0,0)$, with $2 \leq t \leq b-2$.

      If $t=2$, there are three maximal chains containing $(2,2)$: $m_1: (3,b) \rightarrow (2,2) \rightarrow (1,1) \rightarrow (0,0)$,
      $m_2: (3,b) \rightarrow (2,2) \rightarrow (1,0) \rightarrow (0,0)$ and $m_3: (3,b) \rightarrow (2,2) \rightarrow (0,1) \rightarrow (0,0)$.
      By Lemma \ref{Falling chains}, $m_1$ is not falling, since $(1,1)$ is the least atom of $[(2,2),(0,0)]$, while $m_2$ and $m_3$ are falling.
      %These are the only maximal chains of length $3$.

      Let $t \geq 3$. If the third element $p_2$ in the chain $m$ is of the form $(0,t-1)$, it is a border element and $m$ is not falling.
      By Lemma \ref{Falling chains} we can exclude $p_2 = (1,t-1)$ since it is the least atom of $[(2,t),(0,0)]$.
      Assume $p_2$ is of the form $(1,h)$, with $0 \leq h \leq t-2$.
      Thus $h=0$ or $h=1$, since for $h \geq 2$ the element $(1,h)$ is a border element. The element $p_3$ of $m$ is now forced to be $(0,0)$.
      By Lemma \ref{Falling chains}, all maximal chains of the form $(3,b) \rightarrow (2,t) \rightarrow (1,h) \rightarrow (0,0)$,
      with $3 \leq t \leq b-2$ and $h=0,1$ are falling.
      Then we have $2+2(b-2-3+1)=2(b-3)$ falling chains of length $3$. Moreover, there are no falling chains of length $\geq 4$.
      Thus (FCH) implies $\rank H_1 ( \Delta (  P(3,b)^\ast ); \mathbb Z ) =2(b-3)$ and
      $\rank H_i ( \Delta (  P(3,b)^\ast ); \mathbb Z ) =0$, for $i \geq 2$. Hence $t_{(3,b)} = 1$ for every $b \geq 4$.
  \end{enumerate}
\end{example}

\begin{proposition} \label{P.vanishing}
  For every $2 \leq a \leq b$, the reduced homology
   $\widetilde H_i(\Delta (  P(a,b) ); \mathbb Z )$ is zero whenever $i> a-2$.
\end{proposition}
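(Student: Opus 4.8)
I would obtain the statement from the Falling-Chain-Homology principle (FCH) together with Theorem \ref{T.shellable}. By (FCH) and the isomorphism $\Delta(P(a,b))\cong\Delta(P(a,b)^\ast)$, the rank of $\widetilde H_i(\Delta(P(a,b));\ZZ)$ equals the number of falling chains of $P(a,b)^\ast$ of length $i+2$; and by Theorem \ref{T.shellable} each $\widetilde H_i(\Delta(P(a,b));\ZZ)$ is a free abelian group. Hence it suffices to prove the purely combinatorial claim that \emph{every falling chain in $P(a,b)^\ast$ has length at most $a$}: for $i>a-2$, every chain of length $i+2$ has length exceeding $a$ and so is not falling, whence $\widetilde H_i$ is free of rank $0$ and therefore vanishes.

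For the combinatorial claim I would take a falling chain $m\colon (a,b)=p_0\rightarrow p_1\rightarrow\cdots\rightarrow p_k=(0,0)$ with $k\ge 2$, write $p_i=(c_i,d_i)$, and track the first coordinate. Since every cover $p_i\rightarrow p_{i+1}$ in $P(a,b)^\ast$ is a reverse cover in $P(a,b)$, the integers $c_0\ge c_1\ge\cdots\ge c_k=0$ decrease strictly at each step where $c_i>0$; thus, setting $j:=\min\{\,i : c_i=0\,\}$, the strictly decreasing chain $a=c_0>c_1>\cdots>c_j=0$ forces $j\le a$, and $c_i=0$ for all $i\ge j$. The first reduction is to show $j\ge k-1$: if some index $i$ satisfies $j\le i\le k-2$, then $p_i=(0,d_i)$ with $i\ge 1$, so Lemma \ref{Falling chains}(ii) forbids $p_i$ from being a border element and hence forces $d_i\le 1$; but $d_i=0$ is impossible since $p_i\neq(0,0)$, and $d_i=1$ makes $(0,0)$ the unique element covered by $p_i$ in $P(a,b)$, so $p_{i+1}=(0,0)=p_k$ and $i=k-1$, a contradiction. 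Thus $j\in\{k-1,k\}$, and if $j=k$ then $k=j\le a$ and we are done.

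The case $j=k-1$ is where the genuine obstacle lies, namely ruling out $j=a$, i.e. a falling chain of length $a+1$ whose first coordinate drops by exactly one at every step. When $j=k-1$ we have $p_{k-1}=(0,d_{k-1})$, and since $p_{k-1}$ covers $(0,0)$ in $P(a,b)$ while $(0,1)$ is the only atom of $P(a,b)$ with vanishing first coordinate, necessarily $p_{k-1}=(0,1)$. If $j\le a-1$ then $k=j+1\le a$; so assume $j=a$. Then the strictly decreasing chain from $c_0=a$ to $c_a=0$ must satisfy $c_i=a-i$, so $p_{k-2}=p_{a-1}=(1,d_{k-2})$. Since $a\ge 2$, the index $k-2=a-1$ lies in the range $1\le i\le k-2$, so Lemma \ref{Falling chains}(ii) again forbids $p_{k-2}$ from being a border element, giving $d_{k-2}\le 1$. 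On the other hand $p_{k-2}\rightarrow p_{k-1}=(0,1)$ means $(0,1)<(1,d_{k-2})$ in $P(a,b)$, which holds only if $d_{k-2}\ge 2$ --- a contradiction. Hence $k\le a$ in every case, and with the reduction of the first paragraph this yields $\widetilde H_i(\Delta(P(a,b));\ZZ)=0$ for all $i>a-2$.

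The points needing care are entirely of bookkeeping nature: applying condition (ii) of Lemma \ref{Falling chains} only to interior vertices $p_i$ with $1\le i\le k-2$; handling short chains (for instance $k=2$, or the degenerate possibility $j=1$), where the bound $k\le a$ still has to be extracted from $a\ge 2$; and the elementary verifications identifying which elements of $P(a,b)$ cover, or are covered by, $(0,1)$ and $(0,d)$. None of these is hard, but each is a place where an off-by-one slip could occur, so I would record them explicitly before running the main argument.
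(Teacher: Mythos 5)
Your proof is correct and follows essentially the same route as the paper: reduce via (FCH) to showing that no falling chain has length exceeding $a$, then track the decay of the first coordinate to force the element two steps before $(0,0)$ to be a border element (or to violate condition (i)), contradicting Lemma~\ref{Falling chains}. The only difference is organizational --- the paper treats $a\in\{2,3\}$ and $b=a$ as separate cases, whereas your argument via the index $j$ handles all $a\geq 2$ uniformly.
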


\begin{proof}
  If $a=2,3$, the claim follows from Example \ref{E.a=2,3}. Hence we may assume $a \geq 4$.
  By (FCH), it suffices to show that there are no falling chains of length $\ell \geq a+1$.
  If $b=a$, then there are no chains of length $\ell \geq a+1$, since $\ell(P(a,b)^\ast)=\ell(P(a,b))=a$ (see discussion after Definition \ref{def recursive atom ordering}).

  Let $b \geq a+1$ and $m: p_0=(a,b) \rightarrow p_1 \rightarrow \cdots \rightarrow p_{\ell-1} \rightarrow p_\ell = (0,0)$
  be a maximal chain of length $\ell \geq a+1$.
  We denote the $i$\textsuperscript{th} element of $m$ by $p_i=(c_i,d_i)$.
  Notice that, for every $i$, if $c_i \neq 0$, then $c_{i+1} < c_i$, otherwise if $c_i=0$, then $c_{i+1}=0$.
  Moreover, $0 \leq c_{i+1} \leq c_i \leq \max\{ 0,a-i \}$, for every $i$. In particular, $c_{\ell -1} \leq 0$, hence $c_{\ell -1}=0$.
  Thus $d_{\ell-1}=1$, otherwise $m$ is not maximal. On the other hand, $0 \leq c_{\ell-2} \leq 1$, hence $d_{\ell-2} \geq 2$,
  otherwise $c_{\ell-2} \nleq c_{\ell-1}$. Therefore, $p_{\ell-2}$ is a border element and $m$ is not a falling chain.
\end{proof}

%Suppose $\ell=a+1$ and $b=a+1$. Let us fix a maximal chain of length $\ell=a+1$ in $P_{(a,a+1)}^\ast$. Let us denote the $i$\textsuperscript{th} element of the maximal chain by $x_i=(y_i,z_i)$; so for instance $y_\ell=z_\ell=0$. Set $y_{\ell-1}=t$. Along the maximal chain we need to increase at least by one at each edge. Since the chain has length $\ell=a+1$, in the last element we would have at least $t+a$. Since the chain is maximal we need to have $t+a=a$, and hence $t=0=y_{\ell-1}$. Then by maximality $z_{\ell-1}=1$. Repeating the argument, for each $i$ we have $y_{\ell-i-1}\leq y_{\ell-i}\leq i-1$ and $z_{\ell-i}=i$. In particular, such a chain contains border elements, then by Lemma \ref{Falling chains} this chain is not falling.
%
%Suppose $b=a+k$ for some $k\geq 2$. As before $a_{\ell-1}=0$ and $b_{\ell-1}=1$. The second element in such a chain is such that $y_{\ell-2}\leq 1$ and $2 \leq z_{\ell-2}\leq k+1$. Hence, such a maximal chain contains border elements and by Lemma \ref{Falling chains} it is not falling.
%
%Suppose that $\ell \geq a+2$. Chains of this length are such that $y_{\ell-1}=y_{\ell-2}=0$ and hence contain border elements. This concludes the proof.

The following result, together with Corollary \ref{C.LastNonzeroHomology}, shows that the
highest degree in which the homology of $\Delta (  P(a,b) )$ is non-zero depends on
the value of $b$ relative to $a$.

\begin{proposition}\label{Bounds on homology}
  Let $4 \leq a \leq b$. If $2a-3k-2 \leq b \leq 2a-3k$ for some $k \geq 1$, then
  \[
     t_{(a,b)}=a-2-k.
  \]
\end{proposition}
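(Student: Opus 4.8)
The plan is to use the Falling-Chain-Homology principle (FCH) together with the combinatorial description of falling chains in Lemma \ref{Falling chains}. By (FCH), $t_{(a,b)}$ equals the maximal $\ell-2$ for which $P(a,b)^\ast$ has a falling chain of length $\ell$, so it suffices to compute the maximal length of a falling chain under the hypothesis $2a-3k-2 \le b \le 2a-3k$. I would proceed in two directions: first an upper bound $t_{(a,b)} \le a-2-k$, by showing no falling chain of length $a+1-k$ or longer exists; second a lower bound, by explicitly exhibiting a falling chain of length $a-k$.

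For the upper bound, consider a maximal chain $m: (a,b)=p_0 \to p_1 \to \cdots \to p_\ell=(0,0)$ with $p_i=(c_i,d_i)$ and recall from the proof of Proposition \ref{P.vanishing} the two monotonicity facts: if $c_i\neq 0$ then $c_{i+1}<c_i$, and once a coordinate hits $0$ it stays $0$. So the first coordinate strictly decreases from $a$ until it becomes $0$, and symmetrically for the second. The key quantitative step is to track how much ``room'' is consumed: reaching $(0,0)$ from $(a,b)$ along covers forces the chain to be long, but Lemma \ref{Falling chains}(ii) forbids passing through any border element at an interior position, i.e. the chain may not linger on a line $c_i\in\{0,1\}$ or $d_i\in\{0,1\}$ except at the very end. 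Combined with Lemma \ref{Falling chains}(i) (one cannot take the lexicographically least atom $(c_i-1,d_i-1)$ at an interior non-border step), this means that on most steps the chain can decrease only \emph{one} of the two coordinates, and the moment it enters the ``border region'' it must jump to $(0,0)$ within one step. A careful bookkeeping of how many coordinate-decrements in each of the two coordinates are available before the border region is unavoidably entered — using the hypothesis $b\le 2a-3k$ to bound $d$ — yields that $\ell \le a-k$, hence $t_{(a,b)}\le a-2-k$. I expect the arithmetic here to hinge on the precise shape of the interval $2a-3k-2\le b\le 2a-3k$: the width $3$ of this interval and the jump of $3$ in $k$ should correspond exactly to the ``cost'' of each additional forced shortening of the chain.

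For the lower bound I would construct an explicit falling chain of length $a-k$ realizing $t_{(a,b)}=a-2-k$. A natural candidate, guided by the $a=3$ analysis in Example \ref{E.a=2,3}, is to start by decreasing the first coordinate one step at a time, $(a,b)\to(a-1,*)\to(a-2,*)\to\cdots$, choosing the second coordinates so as to avoid the lexicographically least atom at each step (by keeping $d_i$ strictly larger than $c_i$ until near the end, and never landing on $(c_i-1,d_i-1)$), then, once the first coordinate is small, make a controlled descent $\cdots\to(1,h)\to(0,0)$ with $h\in\{0,1\}$, exactly as in the $a=3$ case. One verifies via Lemma \ref{Falling chains} that this chain is falling — conditions (i) and (ii) are arranged by construction — and that its length is $a-k$ precisely when $b$ lies in the stated interval (the value of $b$ relative to $2a$ controls how many first-coordinate steps can be ``absorbed'' while keeping the second coordinate large enough and non-border).

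The main obstacle will be the upper bound: translating the border-avoidance and least-atom-avoidance constraints into a tight inequality on $\ell$ requires a delicate accounting of the trajectory of $(c_i,d_i)$, distinguishing the regime where both coordinates are still $\ge 2$ from the transition into the border region, and extracting exactly the bound $a-k$ rather than something weaker. The symmetry between the two coordinates is only partial here (the chain starts at $(a,b)$ with $a\le b$), so the argument must carefully exploit that the larger coordinate $b$ is the binding constraint via $b\le 2a-3k$. Once the extremal length $a-k$ is pinned down from both sides, the conclusion $t_{(a,b)}=a-2-k$ is immediate from (FCH).
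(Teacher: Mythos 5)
Your overall strategy is the same as the paper's (FCH together with Lemma \ref{Falling chains}; an upper bound by excluding long falling chains and a lower bound by an explicit chain), but both halves have genuine gaps. For the upper bound, your intermediate claim that ``on most steps the chain can decrease only \emph{one} of the two coordinates'' is false and points away from the correct count: border-avoidance forces \emph{both} coordinates to strictly decrease at every interior step, the cover relation forces one of the two decrements to equal $1$, and least-atom avoidance then forces the other to be at least $2$. Hence every interior step consumes at least $3$ units of the total budget $a+b\le 3a-3k$, and after handling the final one or two steps according to whether the chain exits through $(1,1)$, $(1,0)$ or $(0,1)$, a chain of length $\ell\ge a-k+1$ gives $3(\ell-2)\le\sum(u_i+v_i)\le a+b-4<3(\ell-2)$, a contradiction. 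You correctly guess that ``$3$ per step'' is the relevant cost, but you never derive it, and the ``careful bookkeeping'' you defer is precisely the content of the proof.

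The lower bound is where the proposal actually breaks. If the first coordinate drops by exactly $1$ at every step, least-atom avoidance forces the second coordinate to drop by at least $2$ at every step, so after $a-k-2$ such steps the second coordinate is at most $b-2(a-k-2)\le 4-k$. For $k\ge 3$, and already for $k=2$ at the lower end $b=2a-8$ of the interval, this is at most $1$, so the element preceding your final descent is a border element sitting at an interior position and the chain is not falling; shortening the single-decrement phase does not help, because the final jump to $(1,h)$ forces the second coordinate to be exactly $2$ there, which again requires $b\ge 2a-2k-2$ and is incompatible with $b\le 2a-3k$ once $k\ge 2$. The construction must alternate modes: the paper's chain uses decrements $(-1,-2)$ for the first $a-2k-2$ steps, then switches to $(-2,-1)$ for the next $k$ steps, and ends $\rightarrow(1,0)\rightarrow(0,0)$; the switch point is calibrated by $k$ so that the second coordinate stays at least $2$ until the penultimate element (this is exactly where $b\ge 2a-3k-2$ is used) while the total length comes out to $a-k$. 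A single-mode descent cannot achieve length $a-k$ throughout the stated range of $b$.
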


\begin{proof}
  Let $2a-3k-2 \leq b \leq 2a-3k$ for some $k \geq 1$. Then the chain
  \begin{gather*}
    m: \widehat{0}\!=\!(a,b) \rightarrow (a\!-\!1,b\!-\!2) \rightarrow (a\!-\!2,b\!-\!4) \rightarrow\!
       \cdots\! \rightarrow (2k\!+\!2,b\!-\!2a\!+\!4k\!+\!4) \rightarrow\\ \rightarrow (2k,b\!-\!2a\!+\!4k\!+\!3)
       \rightarrow (2k\!-\!2,b\!-\!2a\!+\!4k\!+\!2) \rightarrow\! \cdots\! \rightarrow (2,b\!-\!2a\!+\!3k\!+\!4)
       \rightarrow (1,0) \rightarrow (0,0)\!=\!\widehat{1}
  \end{gather*}
  is maximal since $2k \geq 2$ and $2k+2 \leq a$. To show the second inequality, it is enough to notice that $k \leq \lfloor \frac{a}{3} \rfloor$. Moreover, $m$ is falling by Lemma \ref{Falling chains} and has length $a-k-2$. Note that $b-2a+3k+4 \geq 2$, since $b \geq 2a-3k-2$ by assumption.
  Thus, by (FCH), $t_{(a,b)} \geq a-2-k$.

  Conversely, we show that $t_{(a,b)}\leq a-2-k$. Again by (FCH), it is enough
  to show that no maximal chain of length $\ell \geq a-k+1$ in $P(a,b)^\ast$ is falling.

  Let $m : (a,b) = p_0 \rightarrow \cdots \rightarrow p_\ell = (0,0)$ be a maximal chain
  in $P(a,b)^\ast$ of length $\ell \geq a-k+1$. We denote the $i$\textsuperscript{th} element of $m$ by $p_i=(c_i,d_i)$. Furthermore,
  for $i=1,\ldots,\ell$, let us denote by $u_i=c_{i-1}-c_i$ and $v_i=d_{i-1}-d_i$ the increment on the first and on the
  second component respectively. Since $m$ is a maximal chain,
  $p_{\ell-1}$ is one of the elements $(1,1), (1,0)$ and $(0,1)$. Hence, we have one of the following three cases:
  \begin{enumerate}
    \item[(i)] $u_\ell=1$ and $v_\ell=1$,
    \item[(ii)] $u_\ell=0$ and $v_\ell=1$,
    \item[(iii)] $u_\ell=1$ and $v_\ell=0$.
  \end{enumerate}

  Note that the cases (ii) and (iii) are symmetric, hence it is enough to discuss (i) and (ii).

\smallskip
  \noindent {\sf Claim:}
    For every $i=1,\dots,\ell-1$, $u_i>0$ and $v_i>0$.

  \noindent $\triangleleft$ In fact, if $u_i=0$ for some
  $1 \leq i \leq \ell-1$, then $v_i \geq 1$ and $c_{i-1}=c_i$.
  This only happens if $c_i=0$, hence $p_{i-1}=(0,d_{i-1})$ is a border element
  and $m$ is not a falling chain by Lemma \ref{Falling chains}. By symmetry, also $v_i>0$
  for $i=1,\dots,\ell-1$. $\triangleright$
\smallskip

  First assume that (ii) holds, thus $u_\ell=0$ and $v_\ell=1$. If $u_{\ell-1}=0$ or $u_{\ell-1}=1$, then $c_{\ell-2}=c_{\ell-1}=0$ or $c_{\ell-2}=c_{\ell-1}+1=1$.
  Hence $m$  is not a falling chain by Lemma \ref{Falling chains}, since it contains a border element. Thus $u_{\ell-1}\geq 2$ and this implies $c_{\ell-2}\geq 2$.
  By the maximality of $m$, it follows that $v_{\ell-1}=1$. Hence
  \[
    \sum_{i=1}^{\ell-2} u_i \leq a-2 \mbox{ and } \sum_{i=1}^{\ell-2} v_i = b-2.
  \]

  Summing the two expressions, we have
  \[
    \sum_{i=1}^{\ell-2} (u_i+v_i) \leq a+b-4 \leq 3a-3k-4 < 3(a-k-1) \leq 3(\ell-2).
  \]

  If $u_i+v_i \geq 3$ for every $i=1,\dots,\ell-2$, then we have
  \[
    3(\ell-2) \leq \sum_{i=1}^{\ell-2} (u_i+v_i) < 3(\ell-2),
  \]
  which is a contradiction. Thus, since both $u_i$ and $v_i$ are positive,
  there exists $1 \leq j \leq \ell-2$ such that $u_j+v_j=2$ and then $u_j=v_j=1$.
  This implies that the element $p_j$ in $m$ is the least
  atom of $[p_{j-1},(0,0)]$, and thus $m$ is not a falling chain by Lemma \ref{Falling chains}.

  For the case (i), using a similar argument, one shows that there exists $1 \leq j \leq \ell-2$
  such that $u_j+v_j=2$ and then $u_j=v_j=1$. This concludes the proof.
\end{proof}

\begin{corollary} \label{C.contractible}
  Let $2 \leq a \leq b$. Then $\Delta( P(a,b))$ is contractible if and only if $a=b=3$.
  Moreover, $\Delta( P(3,3))$ is collapsible.
\end{corollary}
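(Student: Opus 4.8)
The plan is to combine the homology information already obtained in the excerpt with a direct inspection of $P(3,3)$. Since $\Delta(P(a,b))$ is shellable by Theorem~\ref{T.shellable}, it is homotopy equivalent to a wedge of spheres, hence contractible exactly when $\widetilde H_i(\Delta(P(a,b));\mathbb Z)=0$ for all $i$. So it suffices to (a) exhibit a nonzero reduced homology group of $\Delta(P(a,b))$ whenever $2\le a\le b$ and $(a,b)\neq(3,3)$, and (b) prove that $\Delta(P(3,3))$ is collapsible, which in particular yields contractibility and hence the remaining direction of the equivalence.

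For part (a) I would distinguish cases according to $a$. If $a=2$, Example~\ref{E.a=2,3} gives $\operatorname{rank}H_0(\Delta(P(2,b));\mathbb Z)=3$, so $\widetilde H_0\neq 0$. If $a=3$ and $b\ge 4$, Example~\ref{E.a=2,3} gives $\operatorname{rank}\widetilde H_1(\Delta(P(3,b));\mathbb Z)=2(b-3)>0$. Now assume $a\ge 4$. If $b\ge 2a-2$, I would read off the top homology from \eqref{Eq.homology}: setting $i=a-2$, the coefficient $\binom{a-3-i}{t-1}=\binom{-1}{t-1}$ equals $1$ for $t=0$ and vanishes for $t\ge 1$, so only the $t=0$ summand survives and yields $\operatorname{rank}\widetilde H_{a-2}(\Delta(P(a,b));\mathbb Z)=2\binom{b-a}{a-2}$, which is positive because $b-a\ge a-2$. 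If instead $a\le b\le 2a-3$, then $2a-b\ge 3$, so among the three consecutive integers $2a-b-2,\,2a-b-1,\,2a-b$ exactly one is a positive multiple of $3$; taking $k\ge 1$ such that $3k$ is that multiple gives $2a-3k-2\le b\le 2a-3k$, and Proposition~\ref{Bounds on homology} then gives $t_{(a,b)}=a-2-k$. Since $a\le b$ forces $3k\le a$, hence $k\le\lfloor a/3\rfloor\le a-3$, we get $t_{(a,b)}=a-2-k\ge 1$, so $\widetilde H_{t_{(a,b)}}(\Delta(P(a,b));\mathbb Z)\neq 0$. In each of these cases $\Delta(P(a,b))$ is not contractible.

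For part (b) I would describe $\Delta(P(3,3))$ explicitly. The proper part $P(3,3)\setminus\{\widehat 0,\widehat 1\}$ is the set of eight pairs $(c,d)$ with $0\le c,d\le 2$ and $(c,d)\neq(0,0)$, and since $\ell(P(3,3))=3$, every chain of the proper part has at most two elements, so $\Delta(P(3,3))$ is a graph. Checking the proper divisibility order among these eight elements shows that its $1$-faces are exactly $\{(1,0),(2,0)\}$, $\{(1,0),(2,1)\}$, $\{(1,0),(2,2)\}$, $\{(0,1),(0,2)\}$, $\{(0,1),(1,2)\}$, $\{(0,1),(2,2)\}$ and $\{(1,1),(2,2)\}$. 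Hence $\Delta(P(3,3))$ is a connected graph on $8$ vertices with $7$ edges, i.e.\ a tree; and a tree collapses to a point by repeatedly deleting a leaf together with its unique incident edge. Therefore $\Delta(P(3,3))$ is collapsible, and in particular contractible, completing the proof.

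This argument is elementary; there is no deep obstacle, and the points that need care are purely bookkeeping. One should verify that for $a\ge 4$ the two ranges $a\le b\le 2a-3$ and $b\ge 2a-2$ cover all $b\ge a$ (they do, since $2a-3\ge a$), apply the convention $\binom{-1}{-1}=1$ (and $\binom{n}{k}=0$ otherwise when $k<0$) correctly when isolating $\widetilde H_{a-2}$ in \eqref{Eq.homology}, and make sure the list of edges of $\Delta(P(3,3))$ is complete.
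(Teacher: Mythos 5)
Your proof is correct and follows essentially the same route as the paper: the paper likewise rules out contractibility for $(a,b)\neq(3,3)$ by invoking the homology computations of Example \ref{E.a=2,3}, Proposition \ref{P.vanishing} and Proposition \ref{Bounds on homology}, and then observes that $\Delta(P(3,3))$ is a connected acyclic graph (a tree), hence collapsible. You simply spell out the case analysis and the edge list of the tree in more detail than the paper's one-line proof.
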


\begin{proof}
  The assertion follows from Example \ref{E.a=2,3}, Proposition \ref{P.vanishing} and Proposition \ref{Bounds on homology}.
  Indeed, the order complex $\Delta (  P(3,3) )$ is a connected acyclic graph, as shown in Figure \ref{F.ord(3,3)}.
  Hence it is collapsible.
  \begin{figure}[ht!]
  \tikzstyle{every picture}=[line width=.65pt,minimum size=3pt,every label/.append style={font=\small},label distance=-4pt]
    \begin{tikzpicture}[scale=0.8]
      \node[label={above:$(0,2)$}] (h) at (4,2) {};
      \node[label={above:$(1,2)$}] (g) at (2,2) {};
      \node[label={above:$(2,2)$}] (f) at (0,2) {};
      \node[label={above:$(2,1)$}] (e) at (-2,2) {};
      \node[label={above:$(2,0)$}] (d) at (-4,2) {};
      \node[label={below:$(0,1)$}] (c) at (2,0) {};
      \node[label={below:$(1,1)$}] (b) at (0,0) {};
      \node[label={below:$(1,0)$}] (a) at (-2,0) {};
      \draw [fill=black] (4,2) circle (1.8pt);
      \draw [fill=black] (2,2) circle (1.8pt);
      \draw [fill=black] (0,2) circle (1.8pt);
      \draw [fill=black] (-2,2) circle (1.8pt);
      \draw [fill=black] (-4,2) circle (1.8pt);
      \draw [fill=black] (2,0) circle (1.8pt);
      \draw [fill=black] (0,0) circle (1.8pt);
      \draw [fill=black] (-2,0) circle (1.8pt);
      \draw (-4,2) -- (-2,0) -- (0,2) -- (2,0) -- (4,2)
      (-2,0) -- (-2,2)
      (2,2) -- (2,0)
      (0,0) -- (0,2);
    \end{tikzpicture}
    \caption{The order complex of $P(3,3)$} \label{F.ord(3,3)}
  \end{figure}
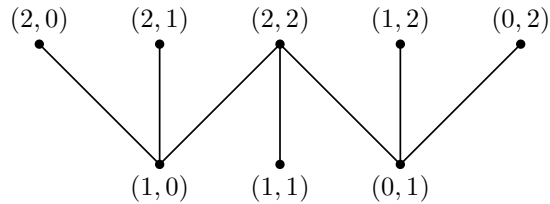
\end{proof}

\begin{lemma}
\label{increments}
Let $m:\widehat{0} = (a,b) = p_0 \rightarrow \cdots \rightarrow p_i=(c_i,d_i) \rightarrow \cdots \rightarrow p_{k+1} = (0,0) = \widehat{1}$ be a falling chain in $P(a,b)^\ast$. Then $p_k$ is one of the elements $(1,0), (1,1)$ and $(0,1)$. Moreover, if, for every $i=1,\ldots,k+1$, we set $u_i=c_{i-1}-c_i$ and $v_i=d_{i-1}-d_i$, then:
\begin{enumerate}
\item[(i)] if $p_k=(1,0)$, then $u_k=1$ and $v_k \geq 2$,
\item[(ii)] if $p_k=(0,1)$, then $u_k \geq 2$ and $v_k=1$.
\end{enumerate}
\end{lemma}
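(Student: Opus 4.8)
The plan is to prove the statement in three short steps: identify $p_k$ as an atom of $P(a,b)$ and list the three possibilities; then, separately for $p_k = (1,0)$ and $p_k = (0,1)$, analyze the single cover relation $p_{k-1} \rightarrow p_k$ using the combinatorics of proper divisibility together with the fact, from Lemma \ref{Falling chains}, that a falling chain has no border element in an interior position.

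First, since $p_k \rightarrow p_{k+1} = (0,0)$ in $P(a,b)^\ast$, the element $p_k$ covers $\widehat 0 = (0,0)$ in $P(a,b)$, i.e.\ $p_k$ is an atom of $P(a,b)$. If an atom $(c,d)$ had $c \geq 2$, then $(1,0)$ would properly divide $(c,d)$ (the first coordinate satisfies $1 < c$, the second satisfies $0 = d$ or $0 < d$), so $(0,0) < (1,0) < (c,d)$, contradicting minimality; symmetrically $d \geq 2$ is impossible. As $(0,0)$ is excluded, $p_k \in \{(1,0),(1,1),(0,1)\}$, which is the first assertion. By Example \ref{E.a=2,3} the remaining claims hold for $a \leq 3$, so I assume $a \geq 4$. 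In this range $\widehat 0 = (a,b)$ cannot itself cover $(1,0)$ or $(0,1)$: we will see below that any cover of one of these has a coordinate equal to $2$, which would force $a = 2$ or $b = 2 \leq a$, against $a \leq b$. Hence, whenever $p_k \in \{(1,0),(0,1)\}$ we have $k \geq 2$, so $p_{k-1}$ lies in an interior position $1 \leq k-1 \leq (k+1)-2$ of $m$ and Lemma \ref{Falling chains}(ii) applies to it.

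Now suppose $p_k = (1,0)$ and write $p_{k-1} = (c_{k-1},d_{k-1})$. From $p_{k-1} \rightarrow p_k$ in $P(a,b)^\ast$, the element $p_{k-1}$ covers $(1,0)$ in $P(a,b)$; in particular $(1,0) < p_{k-1}$ forces $c_{k-1} \geq 2$. If $c_{k-1} \geq 3$, then $(2,0)$ lies strictly between $(1,0)$ and $p_{k-1}$ (indeed $(1,0)$ properly divides $(2,0)$, and $(2,0)$ properly divides $(c_{k-1},d_{k-1})$ because $2 < c_{k-1}$), contradicting that the relation is a cover. Hence $c_{k-1} = 2$ and $u_k = c_{k-1} - 1 = 1$. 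If moreover $d_{k-1} \leq 1$, then $p_{k-1} \in \{(2,0),(2,1)\}$ is an interior border element of $P(a,b)^\ast$ in $m$, contradicting Lemma \ref{Falling chains}(ii); so $d_{k-1} \geq 2$ and $v_k = d_{k-1} \geq 2$, proving (i). The case $p_k = (0,1)$ is symmetric with the coordinates exchanged: $p_{k-1}$ covers $(0,1)$, which forces $d_{k-1} \geq 2$; if $d_{k-1} \geq 3$ then $(0,2)$ lies strictly between, so $d_{k-1} = 2$ and $v_k = 1$; and $c_{k-1} \leq 1$ would make $p_{k-1} \in \{(0,2),(1,2)\}$ an interior border element, so $u_k = c_{k-1} \geq 2$, proving (ii).

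The one point requiring care is the description of the covers of $(1,0)$ and of $(0,1)$: one has to observe that such a cover has a fixed coordinate equal to exactly $2$ — the lower bound because $1 \neq 0$ forces a strict inequality in that coordinate, the upper bound because otherwise $(2,0)$ (resp.\ $(0,2)$) would fit strictly in between — whereas the other coordinate is a priori unconstrained, so it is precisely Lemma \ref{Falling chains} that eliminates the two ``border'' values and pins $v_k$ (resp.\ $u_k$) to $\geq 2$. Everything else is a routine verification of the proper divisibility relation and its transitivity.
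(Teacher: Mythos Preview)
Your proof is correct and follows essentially the same line as the paper's: identify $p_k$ as an atom of $P(a,b)$, analyze the cover relation $p_{k-1} \to p_k$ to pin down one increment, and then use the ``no interior border element'' clause of Lemma~\ref{Falling chains} to force the other increment to be at least $2$. If anything, you are more careful than the paper in explicitly checking $k \geq 2$ (so that Lemma~\ref{Falling chains}(ii) applies to $p_{k-1}$) and in spelling out why $v_k \geq 2$ in case~(i), a step the paper's proof leaves implicit.
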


\begin{proof}
Clearly any falling chain contains exactly one of the elements $(1,1), (1,0)$ and $(0,1)$, otherwise it is not maximal in $P(a,b)^\ast$.
\begin{enumerate}
  \item[(i)] Let $p_k=(1,0)$. If $u_k \geq 2$, then $v_k=1$, otherwise the chain is not maximal. This implies that $p_{k-1}=(c_{k-1},1)$, with $c_{k-1} \geq 3$, hence the chain is not maximal because $(c_{k-1},1) < (2,0) < (1,0)$. Thus $u_k=1$ and $v_k \geq 2$.
\end{enumerate}
Similarly one shows (ii).
\end{proof}

In particular, notice that, in both cases of Lemma \ref{increments} there is no further restriction on the other increments $u_i$ and $v_i$, with $i \leq k-1$.

Now we are in position to prove Theorem \ref{T.homology}.

\begin{proof}[Proof of Theorem \ref{T.homology}]
  Let $2\leq a\leq b$ and $0\leq i\leq a-2$.
  By Lemma \ref{increments}, any falling chain
  contains exactly one of the elements $(1,1), (1,0)$ and $(0,1)$.
  Hence, by (FCH), $\rank \widetilde H_i (\Delta (  P(a,b) ); \mathbb Z ) = F_{(1,1)} + F_{(1,0)} + F_{(0,1)}$,
  where $F_{(c,d)}$ denotes the number of falling chains in $P(a,b)$ of length $i+2$ containing the element $(c,d)$.
  Before computing the three contributions we need one more general fact.

  Let  $m: p_0=(a,b) \rightarrow p_1 \rightarrow \cdots \rightarrow p_{i+1} \rightarrow p_{i+2} = (0,0)$
  be a falling chain of length $i+2$. We set $p_\ell=(c_\ell,d_\ell)$ for $0 \leq \ell \leq i+2$
  and $(u_\ell,v_\ell) = p_{\ell-1}-p_\ell$ for $1 \leq \ell \leq i+2$.

  Clearly,
  \begin{equation}\label{Eq.u_h-v_h}
    \sum_{\ell=1}^{i+2} u_\ell=a \text{ and } \sum_{\ell=1}^{i+2} v_\ell=b.
  \end{equation}

  We recall that, for every $1 \leq \ell \leq i+1$, $u_\ell>0$ and $v_\ell>0$ (as in the proof of Proposition \ref{Bounds on homology}).

  We set
  \[
    S=\{\ell : u_\ell=1\},\,\, T=\{\ell : u_\ell \geq 2\} \text{ and } U=\{\ell : v_\ell=1\},\,\, V=\{\ell : v_\ell \geq 2\}
  \]
  and $s=|S|$, $t=|T|$. Notice that $s+2t \leq a$.
  By Lemma \ref{Falling chains}, we have
  \begin{eqnarray}
     \label{e1}
     S \cap U & \subseteq & \{i+2\}.
  \end{eqnarray}
  Since, for $1 \leq \ell \leq i+1$, $p_{\ell-1} < p_{\ell}$
  is a cover relation, it follows that for all $1 \leq \ell \leq i+1$ either
  $u_\ell = 1$ or $v_\ell = 1$ (see also ($\star$)).
  Thus
  \begin{equation}\label{e2}
  \begin{array}{r@{}l}
     S \cap \{1,\ldots,i+1\}\ & = V \\
     U \cap \{1,\ldots,i+1\}\ & = T
  \end{array}
  \end{equation}

  Now we compute the numbers $F_{(1,1)},F_{(1,0)}$ and $F_{(0,1)}$.

  \smallskip

  \noindent $\rightarrow \,\,F_{(1,1)}$

  For contributions to $F_{(1,1)}$, we have $u_{i+2}=1$ and $v_{i+2}=1$. Then
  by the claim above
  \begin{equation*}\label{Eq.(s+t)1}
    s+t=i+2=|U|+|V|.
  \end{equation*}
  By \eqref{e1}, it follows that $S \cap U = \{i+2\}$.
  By \eqref{e2}, we have $S = V \cup \{i+2\}$ and $T \cup \{i+2\} = U$.
  Thus, fixing $S$ fixes the other sets.

  For $S$ we have ${i+1 \choose t}$ choices. Once $S$ is fixed, we have
  ${a-i-3 \choose t-1}$ choices for the $u_\ell$, with $\ell \in T$.
  Now we are left with ${b-i-3 \choose i-t}$ choices for the $v_\ell$, with $\ell \in V$.
  By Lemma \ref{Falling chains}, each choice corresponds to a falling chain.

  This sums up to
  \[
    F_{(1,1)} = \sum_{t=0}^{a-i-2} \binom{b-i-3}{i-t} \binom{a-i-3}{t-1} \binom{i+1}{t}.
  \]

  \smallskip

  \noindent $\rightarrow \,\,F_{(1,0)}$

  For contributions to $F_{(1,0)}$, we have $u_{i+2}=1$ and $v_{i+2}=0$. Thus
  \begin{equation*}\label{Eq.(s+t)2}
    s+t=i+2=|U|+|V|+1.
  \end{equation*}
  By \eqref{e1}, we have $S \cap U = \emptyset$ and, by \eqref{e2},
  $S \setminus \{i+2\} = V$ and $T = U$.
  Again, fixing $S$ fixes the other sets. But there is one additional constraint.
  By Lemma \ref{increments} (i), we have $u_{i+1} = 1$ and $v_{i+1} \geq 2$.
  Thus $i+1 \in S \cap V$.

  For $S$ we have ${i \choose t}$ choices. Once we have $S$ fixed, we have
  ${a-i-3 \choose t-1}$ choices for the $u_\ell$, with $\ell \in T$.
  Now we are left with ${b-i-2 \choose i-t}$ choices for the $v_\ell$, with $\ell \in V$.

  This sums up to
  \[
    F_{(1,0)} = \sum_{t=0}^{a-i-2} \binom{b-i-2}{i-t} \binom{a-i-3}{t-1} \binom{i}{t}.
  \]

  \smallskip

  \noindent $\rightarrow \,\,F_{(0,1)}$

  For contributions to $F_{(0,1)}$, we have $u_{i+2} = 0$ and $v_{i+2}=1$. Thus
  \begin{equation*}
    s+t+1=i+2=|U|+|V|.
  \end{equation*}
  By \eqref{e1}, we have $S \cap U = \emptyset$ and, by \eqref{e2},
  $S = V$ and $T \cup \{i+2\} = U$.
  Again, fixing $S$ fixes the other sets. But also here there is one additional constraint.
  By Lemma \ref{increments} (ii), we have $u_{i+1} \geq 2$ and $v_{i+1} \geq 1$.
  Thus $i+1 \in T \cap U$.

  For $S$ we have ${i \choose t-1}$ choices. Once we have $S$ fixed, we have
  ${a-i-2 \choose t-1}$ choices for the $u_\ell$, with $\ell \in T$.
  Now we are left with ${b-i-3 \choose i-t}$ choices for the $v_\ell$, with $\ell \in V$.

  Since $t \geq 1$, this sums up to
  \begin{eqnarray*}
    F_{(0,1)} & = & \sum_{t=1}^{a-i-1} \binom{b-i-3}{i-t} \binom{a-i-2}{t-1} \binom{i}{t-1}
               =  \sum_{t=0}^{a-i-2} \binom{b-i-3}{i-t-1} \binom{a-i-2}{t} \binom{i}{t}.
  \end{eqnarray*}

  Then, summing these three contributions, we obtain:
  \begin{gather}\label{Eq.homology1}
    \mathrm{rank}\ \widetilde H_i (\Delta (  P(a,b) ); \mathbb Z ) = F_{(1,1)} \!+\! F_{(1,0)} \!+\! F_{(0,1)} \\
    = \sum_{t=0}^{a-i-2} \bigg[ \binom{b\!-\!i\!-\!3}{i\!-\!t} \binom{a\!-\!i\!-\!3}{t\!-\!1} \binom{i\!+\!1}{t} \!+\!
    \binom{b\!-\!i\!-\!2}{i\!-\!t} \binom{a\!-\!i\!-\!3}{t\!-\!1} \binom{i}{t} \!+\! \binom{b\!-\!i\!-\!3}{i\!-\!t\!-\!1} \binom{a\!-\!i\!-\!2}{t} \binom{i}{t} \bigg]. \nonumber
  \end{gather}

  We observe that in \eqref{Eq.homology1} we can replace the upper summation index $a-i-2$ by $i$. In fact, if $i>a-i-2$, then for every $t \geq a-i-1$
  the $t$\textsuperscript{th} summand is zero since $\binom{a-i-3}{t-1}=\binom{a-i-2}{t}=0$. If $i<a-i-2$, we show that the $t$\textsuperscript{th}
  summand is zero for every $t \geq i+1$. In fact, $\binom{i}{t}=0$ and if $t>i+1$, also $\binom{i+1}{t}$=0. We only need to show that, if $t=i+1$,
  the first summand in \eqref{Eq.homology1} is zero. Notice that, $\binom{b-i-3}{-1} \neq 0$ if and only if $b-i-3=-1$. This means that $i=b-2$.
  On the other hand, $2i<a-2$, hence $2b-4<a-2$, that is $2b-2<a \leq b$. Thus $b<2$, in contradiction with $2 \leq a<b$.
  Therefore, $b-i-3 \neq -1$ and $\binom{b-i-3}{-1}=0$. Hence
  \begin{gather*}
    \mathrm{rank}\ \widetilde H_i (\Delta (  P(a,b) ); \mathbb Z ) = F_{(1,1)} \!+\! F_{(1,0)} \!+\! F_{(0,1)} \\
    = \sum_{t=0}^i \bigg[ \binom{b\!-\!i\!-\!3}{i\!-\!t} \binom{a\!-\!i\!-\!3}{t\!-\!1} \binom{i\!+\!1}{t} \!+\! \binom{b\!-\!i\!-\!2}{i\!-\!t}
    \binom{a\!-\!i\!-\!3}{t\!-\!1} \binom{i}{t} \!+\! \binom{b\!-\!i\!-\!3}{i\!-\!t\!-\!1} \binom{a\!-\!i\!-\!2}{t} \binom{i}{t} \bigg] %\\
  \end{gather*}
  \begin{gather*}
    = \sum_{t=0}^i \bigg[ \binom{a\!-\!i\!-\!3}{t\!-\!1} \bigg[ \binom{b\!-\!i\!-\!3}{i\!-\!t} \binom{i\!+\!1}{t} \!+\!
    \binom{b\!-\!i\!-\!2}{i\!-\!t} \binom{i}{t} \bigg] \!+\! \binom{b\!-\!i\!-\!3}{i\!-\!t\!-\!1} \binom{a\!-\!i\!-\!2}{t} \binom{i}{t} \bigg] \\
    = \sum_{t=0}^i \bigg[ \binom{a\!-\!i\!-\!3}{t\!-\!1} \bigg[ \binom{b\!-\!i\!-\!3}{i\!-\!t} \binom{i}{t} \!+\!
    \binom{b\!-\!i\!-\!3}{i\!-\!t} \binom{i}{t\!-\!1} \!+\! \binom{b\!-\!i\!-\!3}{i\!-\!t} \binom{i}{t} \!+\! \binom{b\!-\!i\!-\!3}{i\!-\!t\!-\!1} \binom{i}{t} \bigg] \\
    + \binom{a\!-\!i\!-\!3}{t} \binom{b\!-\!i\!-\!3}{i\!-\!t\!-\!1} \binom{i}{t} \!+\! \binom{a\!-\!i\!-\!3}{t-1} \binom{b\!-\!i\!-\!3}{i\!-\!t\!-\!1} \binom{i}{t} \bigg]\\
    = \sum_{t=0}^i \bigg[ 2 \binom{a\!-\!i\!-\!3}{t\!-\!1} \bigg[ \binom{b-i-3}{i-t} \binom{i}{t} \!+\! \binom{b-i-3}{i-t-1} \binom{i}{t} \bigg] \\
    + \bigg[ \binom{a\!-\!i\!-\!3}{t\!-\!1} \binom{b-i-3}{i-t} \binom{i}{t-1} \!+\! \binom{a\!-\!i\!-\!3}{t} \binom{b-i-3}{i-t-1} \binom{i}{t} \bigg] \bigg] \\
    = \sum_{t=0}^i \bigg[ 2 \binom{a\!-\!i\!-\!3}{t\!-\!1} \binom{b\!-\!i\!-\!2}{i\!-\!t} \binom{i}{t} \!+\!
   \binom{a\!-\!i\!-\!3}{t\!-\!1} \binom{b\!-\!i\!-\!3}{i\!-\!t} \binom{i}{t\!-\!1} \!+\! \binom{a\!-\!i\!-\!3}{t} \binom{b\!-\!i\!-\!3}{i\!-\!t\!-\!1} \binom{i}{t} \bigg].
  \end{gather*}

  Consider the three summands in the brackets above. The second summand is zero if $t=0$. The third one is zero if $t=i$. This follows since, by convention, $\binom{b-i-3}{-1}=1$
  if and only if $b-i-3=-1$. In this case, $b-2=i$ and, on the other hand, $i \leq a-2$, hence $a=b$, $t=i=a-2$ and $\binom{a-a+2-3}{a-2}=0$ for $a \geq 2$. Thus, summing over the
  second and third summand yields
  \begin{gather*}
    \sum_{t=1}^i \binom{a\!-\!i\!-\!3}{t\!-\!1} \binom{b\!-\!i\!-\!3}{i\!-\!t} \binom{i}{t\!-\!1} \!+\!
    \sum_{t=0}^{i-1} \binom{a\!-\!i\!-\!3}{t} \binom{b\!-\!i\!-\!3}{i\!-\!t\!-\!1} \binom{i}{t} \\
    = 2 \sum_{t=1}^i \binom{a\!-\!i\!-\!3}{t\!-\!1} \binom{b\!-\!i\!-\!3}{i\!-\!t} \binom{i}{t\!-\!1} =
    2 \sum_{t=0}^i \binom{a\!-\!i\!-\!3}{t\!-\!1} \binom{b\!-\!i\!-\!3}{i\!-\!t} \binom{i}{t\!-\!1}.
  \end{gather*}
  %where the first equality holds since we shifted by one the index of summation in the second sum and the last equality holds since for $t=0$ the argument of the sum is zero
  %(since $\binom{i}{-1}=0$ for every $i \geq 0$).
  Substituting in the above expression, we get the desired formula.
\end{proof}

\begin{remark}
  Notice that, for $4 \leq a \leq b$, by Theorem \ref{T.homology} it follows that $\rank \widetilde H_1 (\Delta (  P(a,b) ); \mathbb Z ) = 4$.
\end{remark}

%\begin{corollary}\label{C.vanishing}
%For every $a,b \geq 2$, the reduced homology $\widetilde H_i(\Delta (  P_{(a,b)} ); \mathbb Z )$ is zero whenever $i> a-2$.
%\end{corollary}
%
%\begin{proof}
%It is enough to notice that, if $i \geq a-1$, the binomial coefficient $\binom{a-3-i}{t-1}$ is zero for every $t=0,\dots,i$. Then the claim follows from the formula \eqref{Eq.homology}.
%\end{proof}

\begin{corollary} \label{C.LastNonzeroHomology}
  For every $2 \leq a \leq b$, $\mathrm{rank}\ \widetilde H_{a-2}(\Delta (  P(a,b) ); \mathbb Z ) = 2\binom{b-a}{a-2}$. In particular, if $b \geq 2a-2$, then $t_{(a,b)}=a-2$.
\end{corollary}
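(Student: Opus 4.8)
The plan is to specialize the formula \eqref{Eq.homology} of Theorem \ref{T.homology} to the extreme value $i = a-2$ and simplify. Setting $i = a-2$ in the summand
\[
2\binom{a-3-i}{t-1}\left[\binom{i}{t}\binom{b-2-i}{i-t} + \binom{i}{t-1}\binom{b-3-i}{i-t}\right]
\]
the factor $\binom{a-3-i}{t-1} = \binom{-1}{t-1}$ forces $t = 0$ (using the convention $\binom{-1}{-1}=1$, and $\binom{-1}{t-1}=0$ for $t\ge 1$). So only the $t=0$ term survives. First I would plug $t=0$ into the bracket: the term $\binom{i}{t-1}\binom{b-3-i}{i-t} = \binom{a-2}{-1}\binom{b-1-a}{a-2}$ vanishes since $\binom{a-2}{-1}=0$ for $a \geq 2$, while the term $\binom{i}{t}\binom{b-2-i}{i-t} = \binom{a-2}{0}\binom{b-a}{a-2} = \binom{b-a}{a-2}$ remains. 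Hence $\mathrm{rank}\,\widetilde H_{a-2}(\Delta(P(a,b));\mathbb Z) = 2\binom{b-a}{a-2}$, which is the first assertion.

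For the second assertion, observe that $\binom{b-a}{a-2} \neq 0$ precisely when $b-a \geq a-2$, i.e. $b \geq 2a-2$. In that range $\widetilde H_{a-2}$ is nonzero, and by Proposition \ref{P.vanishing} all higher reduced homology vanishes, so $t_{(a,b)} = a-2$. (For $a=2,3$ one should double-check consistency against Example \ref{E.a=2,3}: $a=2$ gives $2\binom{b-2}{0}=2$ contributions to $\widetilde H_0$, matching $\mathrm{rank}\,H_0 = 3$; $a=3$ gives $2\binom{b-3}{1} = 2(b-3)$, matching the computation of $\mathrm{rank}\,H_1$ there.)

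I do not expect any real obstacle here — the corollary is a direct substitution into \eqref{Eq.homology} followed by the elementary observation about when a single binomial coefficient is nonzero, combined with the vanishing bound already established in Proposition \ref{P.vanishing}. The only point requiring a moment of care is the bookkeeping with the convention $\binom{-1}{-1}=1$ versus $\binom{m}{-1}=0$ for $m \geq 0$, to be sure the right single term is picked out; everything else is routine.
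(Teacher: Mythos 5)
Your proof is correct and takes essentially the same route as the paper: specialize \eqref{Eq.homology} to $i=a-2$, note that only the $t=0$ summand survives and equals $2\binom{b-a}{a-2}$, and combine the non-vanishing of $\binom{b-a}{a-2}$ for $b\geq 2a-2$ with Proposition \ref{P.vanishing}. The care you take with the convention $\binom{-1}{-1}=1$ versus $\binom{m}{-1}=0$ is precisely the bookkeeping the paper's terser proof leaves implicit.
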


\begin{proof}
  If $i=a-2$, the expression \eqref{Eq.homology} has only the summand for $t=0$, hence
  $\mathrm{rank}\ \widetilde H_{a-2}(\Delta (  P(a,b) ); \mathbb Z ) = 2\binom{b-a}{a-2}$.
  In particular, if $2 \leq a \leq b < 2a-2$, then $\widetilde H_{a-2}(\Delta (  P(a,b) ); \mathbb Z ) = 0$. The second part of the claim follows from Proposition \ref{P.vanishing}.
\end{proof}

Another noteworthy property of the posets of proper divisibility is that the non-reduced simplicial homology is non-zero for \textit{every} degree between $0$ and $t_{(a,b)}$.

\begin{proof}[Proof of Proposition \ref{P.nojumps}]
  Let $2 \leq a \leq b$. Notice that $\rank H_0(\Delta(P(a,b));\ZZ) \neq 0$ since $\Delta(P(a,b)) \neq \emptyset$. Moreover, the claim is true for $a=2,3$ by Example \ref{E.a=2,3}.

  Hence, by \cite[Thm. 5.9]{BW96}, it suffices to show a falling chain of length $i+2$ in $P(a,b)^\ast$, for every $1 \leq i \leq t_{(a,b)}$. First assume $i=1$. By Lemma \ref{Falling chains}, the chain $\widehat{0}\!=\!(a,b) \rightarrow (a\!-\!1,2) \rightarrow (1,0) \rightarrow (0,0)\!=\!\widehat{1}$ has length $3$ in $P(a,b)^\ast$ and is falling. Now let $i \geq 2$.

If $b \geq 2a-2$, then $t_{(a,b)}=a-2$ by Corollary \ref{C.LastNonzeroHomology}. The chain
\begin{gather*}
m: \widehat{0}\!=\!(a,b) \rightarrow (a\!-\!1,b\!-\!2) \rightarrow (a\!-\!2,b\!-\!4) \rightarrow\! \cdots\! \rightarrow (a\!+\!1\!-\!i,b\!-\!2i\!+\!2) \rightarrow (a\!-\!i,2) \rightarrow (1,0) \rightarrow (0,0)\!=\!\widehat{1}
\end{gather*}
has $i+3$ elements, then its length is $i+2$. Since $i \geq 2$, we obtain $a+1-i<a$ and $b-2i+2<b$. Notice that $a-i \geq 2$ by assumption, then $b-2i+2 \geq 2a-2-2i+2 \geq 4$. By Lemma \ref{Falling chains}, $m$ is falling.

Let $4 \leq a \leq b<2a-2$. We consider the cases $a=4$ and $a=5$ separately. If $a=4$, then $b \in \{4,5\}$; if $a=5$, then $b \in \{5,6,7\}$. By Proposition \ref{Bounds on homology}, $t_{(4,4)}=t_{(4,5)}=1$ and $t_{(5,5)}=t_{(5,6)}=t_{(5,7)}=2$. For $i=1$, a falling chain of length $3$ is given above and for $i=t_{(a,b)}$, a falling chain of length $t_{(a,b)}$ is provided in the proof of Proposition \ref{Bounds on homology}.

Finally we assume $a \geq 6$. Notice that $a-2<b-1$. It suffices to show that there exists a falling chain
\begin{gather*}
c:(a-2,b-1) \rightarrow x_1 \rightarrow \cdots \rightarrow x_{i+1} = (0,0)
\end{gather*}
of length $i+1$ in $P(a-2,b-1)^\ast$, for every $2 \leq i \leq t_{(a,b)}$. In fact, by Lemma \ref{Falling chains}, it follows that the chain
\begin{gather*}
m:\widehat{0} = (a,b) \rightarrow (a-2,b-1) \rightarrow x_1 \rightarrow \cdots \rightarrow x_{i+1} = (0,0) = \widehat{1}
\end{gather*}
obtained by adding the element $(a,b)$ to the chain $c$, is a falling chain of length $i+2$ in $P(a,b)^\ast$.

Since $b \leq 2a-3$, there exists $k \geq 1$ such that $2a-3k-2 \leq b \leq 2a-3k$, then $t_{(a,b)}=a-2-k$, by Proposition \ref{Bounds on homology}. Hence $2(a-2)-3(k-1)-2 \leq b-1 \leq 2(a-2)-3(k-1)$. If $k=1$, then $b-1 \geq 2(a-2)-2$, thus $t_{(a-2,b-1)}=a-4$, by Proposition \ref{C.LastNonzeroHomology}. On the other hand, if $k>1$, then $t_{(a-2,b-1)}=a-2-2-(k-1)=a-3-k$, by Proposition \ref{Bounds on homology}. Hence $t_{(a-2,b-1)}=a-3-k$, for every $k \geq 1$. By induction on $a \geq 4$, for every $1 \leq h \leq a-3-k$, there exists a falling chain of length $h+2$ in $P(a-2,b-1)^\ast$. Since $1 \leq i-1 \leq a-3-k$ by assumption, we conclude the proof.
\end{proof}

\begin{remark}
Even though Theorem \ref{T.homology} provides an explicit formula for the rank of all homology groups, we do not see how to get Propositions \ref{P.nojumps}, \ref{P.vanishing} and \ref{Bounds on homology} as direct consequences Theorem \ref{T.homology}.
\end{remark}

\section{Euler characteristic of the order complex of $ P(a,b)$} \label{S.EulerChar}

In this section we give the proof of Theorem \ref{T.EulerChar}. We will mainly use
generating functions techniques.
%Recall that
%(see, e.g., \cite[p. 209]{S99}), for every $0 \leq j \leq d$,
%\begin{equation} \label{Eq.generatingFunction}
%  \sum_{n=0}^\infty \binom{n+d-j}{d} x^n = \frac{x^j}{(1-x)^{d+1}}.
%\end{equation}

\begin{proof}[Proof of Theorem \ref{T.EulerChar}]
  First we set $f(u,v)$ as the generating series

  \[
    f(u,v) = \sum_{a=2}^{\infty} \sum_{b=a}^{\infty} \widetilde{\chi}(\Delta(P(a,b))) u^a v^b
  \]
  of the reduced Euler-characteristic of $\Delta(P(a,b))$
  \[
    \widetilde{\chi}(\Delta(P(a,b)))  =  \sum_{i \geq 0} (-1)^i \, \rank \widetilde{H}_i(\Delta(P(a,b));\ZZ).
  \]
  By Theorem \ref{T.homology}, we have that, for $2 \leq a \leq b$,
  \[
    \widetilde{\chi}(\Delta(P(a,b)))  =  \sum_{i=0}^{\infty} (-1)^i 2
                 \sum_{t=0}^i \binom{a-3-i}{t-1} \bigg[
                    \binom{i}{t}\binom{b-2-i}{i-t} +
                    \binom{i}{t-1}\binom{b-3-i}{i-t} \bigg].
  \]
  The expression on the right-hand side makes sense for all $a, b \geq 0$ therefore we can
  formally write the series
  \begin{eqnarray} \label{Eq.Ansatz}
    \bar{f}(u,v) & = & \sum_{a=0}^{\infty} \sum_{b=0}^{\infty}
                 \sum_{i=0}^{\infty} (-1)^i 2
                 \sum_{t=0}^i \binom{a-3-i}{t-1} \bigg[
                    \binom{i}{t}\binom{b-2-i}{i-t} +
                    \binom{i}{t-1}\binom{b-3-i}{i-t} \bigg] u^av^b.
  \end{eqnarray}

%  Using elementary generating function identities (see, e.g., \cite[p. 209]{S99}) we
%  obtain :
%  \[
%    \sum_{a=0}^{\infty} \binom{a\!-\!3\!-\!i}{t\!-\!1} u^a =
%    \frac{u^{i+t+2}}{(1\!-\!u)^t}, \quad
%    \sum_{b=0}^{\infty} \binom{b\!-\!2\!-\!i}{i\!-\!t} v^b =
%    \frac{v^{2i-t+2}}{(1\!-\!v)^{i-t+1}}, \quad
%    \sum_{b=0}^{\infty} \binom{b\!-\!3\!-\!i}{i\!-\!t} v^b =
%    \frac{v^{2i-t+\!3}}{(1\!-\!v)^{i-t+1}}.
%  \]

  Interchanging the summation in \eqref{Eq.Ansatz} and using elementary generating function identities
  (see, e.g., identity {\bf d.} in \cite[p. 209]{S99}) we obtain:
  \begin{gather*}
    \bar f(u,v) = 2 \sum_{i=0}^{\infty} (-1)^i
       \left[ \frac{u^{i+2}v^{2i+2}}{(1-v)^{i+1}}
          \sum_{t=0}^i \binom{i}{t}
            \left( \frac{u(1-v)}{(1-u)v} \right)^{\!t} +
                   \frac{u^{i+2}v^{2i+3}}{(1-v)^{i+1}}
                      \sum_{t=0}^i \binom{i}{t-1}
                         \left( \frac{u(1-v)}{(1-u)v} \right)^{\!t} \right]%\\
  \end{gather*}
  \begin{gather*}
    = 2 \sum_{i=0}^{\infty} (-1)^i
          \left[ \frac{u^{i+2}v^{2i+2}}{(1-v)^{i+1}}
             \left(1 + \frac{u(1-v)}{(1-u)v} \right)^{\!i} +
                   \frac{u^{i+2}v^{2i+3}}{(1-v)^{i+1}}
                   \frac{u(1-v)}{(1-u)v} \left( \!\!
                     \left(1 + \frac{u(1-v)}{(1-u)v} \right)^{\!i} -
                     \left(\frac{u(1-v)}{(1-u)v} \right)^{\!i} \right) \right]\\
   = 2 \sum_{i=0}^{\infty} (-1)^i
          \left[ \frac{u^{i+2}v^{i+2}(u+v-2uv)^i}{(1-u)^i(1-v)^{i+1}} +
                 \frac{u^{i+3}v^{i+2}(u+v-2uv)^i}{(1-u)^{i+1}(1-v)^{i+1}} -
                 \frac{u^{2i+3}v^{i+2}}{(1-u)^{i+1}} \right] \\
   = 2 \sum_{i=0}^{\infty} (-1)^i
     \left[ \frac{u^{i+2}v^{i+2}(u+v-2uv)^i(1-uv)}{(1-u)^{i+1}(1-v)^{i+1}} -
            \frac{u^{2i+3}v^{i+2}}{(1-u)^{i+1}} \right]
   = 2 \left( \frac{u^2v^2}{2uv-u-v+1} - \frac{u^3v^2}{u^2v-u+1} \right).
  \end{gather*}

  Notice that, since
  \[
    \frac{1}{u^2v-u+1} = \frac{1}{1-u(1-uv)} =
    \sum_{n=0}^{\infty} (u^n(1-uv)^n),
  \]
  in the Taylor expansion of
  $\frac{u^3v^2}{u^2v-u+1}$ at $(0,0)$ only monomials $u^av^b$,
  where $a>b$, appear with non-zero coefficient.
  Hence, for every
  $2 \leq a \leq b$, the coefficient of $u^av^b$ in the Taylor expansion of
  $\frac{2u^2v^2}{2uv-u-v+1}$ at $(0,0)$ is $\widetilde \chi ( \Delta (  P(a,b) ) )$,
  which in turn is also the coefficient of $u^av^b$ in $f(u,v)$.

  On the other hand, let us compute the generating function of the right-hand
  side of \eqref{Eq.EulerChar}. Using a similar approach as above, we consider the series
  \[
    \bar{g}(u,v) = \sum_{a=0}^{\infty}
               \sum_{b=0}^{\infty} (-1)^a \cdot 2 \
                 \left[ \sum_{h=0}^{\lfloor \frac{a}{2} \rfloor -1} (-1)^h
                    \binom{a-2}{h} \binom{b-a}{a-2-2h} \right] u^av^b.
  \]

  Notice that, if $h \geq \lfloor \frac{a}{2} \rfloor$, then
  $\binom{b-a}{a-2-2h}=0$. Hence, we may extend the sum on $h$ up to
  infinity. Interchanging the order of summation and using
  the identity {\bf d.} in \cite[p. 209]{S99}, we have
  \begin{gather*}
    \bar{g}(u,v) = 2 \sum_{a=0}^{\infty} (-1)^a
               \sum_{h=0}^{\infty} (-1)^h \binom{a-2}{h}
                 \left[ \sum_{b=0}^{\infty}
                   \binom{b-a}{a-2-2h} v^b \right] u^a \\
           = 2 \sum_{a=0}^{\infty} (-1)^a \sum_{h=0}^{\infty} (-1)^h
                 \binom{a-2}{h} \frac{v^{2a-2-2h}u^a}{(1-v)^{a-1-2h}}
           = 2 \sum_{a=0}^{\infty} (-1)^a \frac{v^{2a-2}u^a}{(1-v)^{a-1}}
                 \sum_{h=0}^{\infty} (-1)^h \binom{a-2}{h}
                   \left( \frac{1-v}{v} \right)^{\!2h} \\
           = 2 \sum_{a=0}^{\infty} (-1)^a \frac{v^{2a-2}u^a}{(1-v)^{a-1}}
                 \left( 1-\frac{(1-v)^2}{v^2} \right)^{\!a-2}
           = \frac{2(1-v)v^2}{(2v-1)^2} \sum_{a=0}^{\infty} (-1)^a
                \left( \frac{u(2v-1)}{(1-v)} \right)^{\!a} \\
           = \frac{2(1-v)v^2}{(2v-1)^2} \frac{u^2(2v-1)^2}{(1-v)(2uv-u-v+1)}
           = \frac{2u^2v^2}{2uv-u-v+1}.
  \end{gather*}

  Since we already know that, for $2 \leq a \leq b$, the coefficient of $u^av^b$ in
  the Taylor expansion of $\frac{2u^2v^2}{2uv-u-v+1}$ is
  $\widetilde \chi ( \Delta (  P(a,b) ) )$, the assertion follows.
\end{proof}

\begin{corollary} \label{C.ZeroEulerChar}
  For every $2 \leq a \leq b$, $\widetilde \chi ( \Delta (  P(a,b) ) ) = 0$ if $a=b$ is odd. Moreover, if $a=b$ is even,
  \begin{equation} \label{Eq.EulerCharAeven}
\widetilde \chi ( \Delta (  P(a,b) ) ) = (-1)^{\frac{a-2}{2}} \cdot 2 \binom{a-2}{\frac{a-2}{2}}.
\end{equation}
\end{corollary}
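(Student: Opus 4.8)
The plan is to specialize the closed formula from Theorem \ref{T.EulerChar} to the diagonal $b=a$. Setting $b=a$ in \eqref{Eq.EulerChar} gives
\[
\widetilde \chi ( \Delta ( P(a,a) ) ) = (-1)^a \cdot 2 \sum_{h=0}^{\lfloor \frac{a}{2} \rfloor -1} (-1)^h \binom{a-2}{h} \binom{0}{a-2-2h},
\]
and the entire argument rests on the elementary observation that $\binom{0}{a-2-2h}$ equals $1$ precisely when $a-2-2h=0$ and vanishes otherwise. So the whole proof amounts to tracking for which integers $h$ in the summation range the quantity $a-2-2h$ is zero.

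First I would dispose of the case where $a$ is odd. Then $a-2-2h$ is odd for every integer $h$, hence never zero, so every summand vanishes and $\widetilde \chi ( \Delta ( P(a,a) ) ) = 0$, as claimed.

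Next I would handle the case where $a$ is even. Then $a-2-2h=0$ has the unique integer solution $h_0 = \frac{a-2}{2}$, and the one point deserving a word of care is that $h_0$ genuinely occurs in the range $0 \le h \le \lfloor \frac{a}{2}\rfloor -1$. Since $a$ is even and $a \ge 2$, we have $\lfloor \frac{a}{2}\rfloor -1 = \frac{a}{2}-1 = \frac{a-2}{2} = h_0 \ge 0$, so $h_0$ is exactly the top index of the sum. Thus only the term $h=h_0$ survives, and using $(-1)^a=1$ we obtain
\[
\widetilde \chi ( \Delta ( P(a,a) ) ) = 2\,(-1)^{\frac{a-2}{2}} \binom{a-2}{\frac{a-2}{2}} \binom{0}{0} = (-1)^{\frac{a-2}{2}} \cdot 2 \binom{a-2}{\frac{a-2}{2}},
\]
which is \eqref{Eq.EulerCharAeven}.

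I do not anticipate any real obstacle: this is a direct evaluation of Theorem \ref{T.EulerChar} on the line $b=a$, where almost all terms of the sum collapse because of the factor $\binom{b-a}{a-2-2h} = \binom{0}{a-2-2h}$. The only bookkeeping worth spelling out is the parity dichotomy for $a-2-2h=0$ and the fact that, when $a$ is even, the surviving index $\frac{a-2}{2}$ coincides with the upper summation bound $\lfloor \frac{a}{2}\rfloor -1$.
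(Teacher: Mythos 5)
Your proposal is correct and follows exactly the paper's own argument: specialize Theorem \ref{T.EulerChar} to $b=a$, observe that $\binom{0}{a-2-2h}$ survives only when $a-2-2h=0$, and split by the parity of $a$. The extra check that the surviving index $\frac{a-2}{2}$ equals the upper summation bound $\lfloor \frac{a}{2}\rfloor-1$ is a welcome bit of care that the paper leaves implicit.
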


\begin{proof}
Notice that, using the formula \eqref{Eq.EulerChar}, when $a=b$, the second binomial coefficient is $\binom{0}{a-2-2h}$. This is zero if $a$ is odd, hence $\widetilde \chi ( \Delta (  P(a,a) ) ) = 0$, and it is $1$ when $a$ is even and $h=\frac{a-2}{2}$. In the last case, we get the formula \eqref{Eq.EulerCharAeven}.
\end{proof}

\section*{Acknowledgement}

We thank Michelle Wachs for pointing us to the example by Jay Schweig from \cite{SW08} and Jay Schweig for interesting comments.

\end{document}